\definecolor{todo-background-color}{gray}{0.95}
\def\@settitle{%
	\vspace*{-10pt}
	\begin{flushleft}%
		\LARGE\bfseries
		\strut\@title\strut
	\end{flushleft}%
}
\def\@setauthors{%
	\begingroup
	\def\thanks{\protect\thanks@warning}%
	\trivlist
	\raggedright
	\large \@topsep27\p@\relax
	\advance\@topsep by -\baselineskip
	\item\relax
	\author@andify\authors
	\def\\{\protect\linebreak}%
	\authors
	\ifx\@empty\contribs
	\else
	,\penalty-3 \space \@setcontribs
	\@closetoccontribs
	\fi
	\normalfont
	\endtrivlist
	\endgroup
}
\def\@setaddresses{\par
	\nobreak \begingroup
	\small\raggedright
	\def\author##1{\nobreak\addvspace\smallskipamount}%
	\def\\{\unskip, \ignorespaces}%
	\interlinepenalty\@M
	\def\address##1##2{\begingroup
		\par\addvspace\bigskipamount\noindent
		\@ifnotempty{##1}{(\ignorespaces##1\unskip) }%
		{\ignorespaces##2}\par\endgroup}%
	\def\curraddr##1##2{\begingroup
		\@ifnotempty{##2}{\nobreak\noindent\curraddrname
			\@ifnotempty{##1}{, \ignorespaces##1\unskip}\/:\space
			##2\par}\endgroup}%
	\def\email##1##2{\begingroup
		\@ifnotempty{##2}{\nobreak\noindent E-mail address%
			\@ifnotempty{##1}{, \ignorespaces##1\unskip}\/:\space
			\ttfamily##2\par}\endgroup}%
	\def\urladdr##1##2{\begingroup
		\def~{\char`\~}%
		\@ifnotempty{##2}{\nobreak\noindent\urladdrname
			\@ifnotempty{##1}{, \ignorespaces##1\unskip}\/:\space
			\ttfamily##2\par}\endgroup}%
	\addresses
	\endgroup
	\global\let\addresses=\@empty
}
\def\@setabstracta{%
	\ifvoid\abstractbox
	\else
	\skip@17pt \advance\skip@-\lastskip
	\advance\skip@-\baselineskip \vskip\skip@
	\box\abstractbox
	\prevdepth\z@ 
	\vskip-28pt
	\fi
}
\renewenvironment{abstract}{%
	\ifx\maketitle\relax
	\ClassWarning{\@classname}{Abstract should precede
		\protect\maketitle\space in AMS document classes; reported}%
	\fi
	\global\setbox\abstractbox=\vtop \bgroup
	\normalfont\small
	\list{}{\labelwidth\z@
		\leftmargin0pc \rightmargin\leftmargin
		\listparindent\normalparindent \itemindent\z@
		\parsep\z@ \@plus\p@
		
	}%
	\item[\hskip\labelsep\bfseries\abstractname.]%
}{%
	\endlist\egroup
	\ifx\@setabstract\relax \@setabstracta \fi
}
\def\ps@headings{\ps@empty
	\def\@evenhead{%
		\setTrue{runhead}%
		\normalfont\scriptsize
		\rlap{\thepage}\hfill
		\def\thanks{\protect\thanks@warning}%
		\leftmark{}{}}%
	\def\@oddhead{%
		\setTrue{runhead}%
		\normalfont\scriptsize
		\def\thanks{\protect\thanks@warning}%
		\rightmark{}{}\hfill \llap{\thepage}}%
	\let\@mkboth\markboth
}\ps@headings
\def\section{\@startsection{section}{1}%
	\z@{-1.4\linespacing\@plus-.5\linespacing}{.8\linespacing}%
	{\normalfont\bfseries\Large}}
\def\subsection{\@startsection{subsection}{2}%
	\z@{-.8\linespacing\@plus-.3\linespacing}{.5\linespacing\@plus.2\linespacing}%
	{\normalfont\bfseries\large}}
\def\subsubsection{\@startsection{subsubsection}{3}%
	\z@{.7\linespacing\@plus.2\linespacing}{-1.5ex}%
	{\normalfont\itshape}}
\def\paragraph{\@startsection{paragraph}{4}%
	\z@{.7\linespacing\@plus.2\linespacing}{-1.5ex}%
	{\normalfont\itshape}}
\def\@secnumfont{\bfseries}
\renewcommand\contentsnamefont{\bfseries}
\def\@starttoc#1#2{\begingroup
	\setTrue{#1}%
	\par\removelastskip\vskip\z@skip
	\@startsection{}\@M\z@{\linespacing\@plus\linespacing}%
	{.5\linespacing}{
		\contentsnamefont}{#2}%
	\ifx\contentsname#2%
	\else \addcontentsline{toc}{section}{#2}\fi
	\makeatletter
	\@input{\jobname.#1}%
	\if@filesw
	\@xp\newwrite\csname tf@#1\endcsname
	\immediate\@xp\openout\csname tf@#1\endcsname \jobname.#1\relax
	\fi
	\global\@nobreakfalse \endgroup
	\addvspace{32\p@\@plus14\p@}%
	\let\tableofcontents\relax
}
\def\contentsname{Contents}
\def\l@section{\@tocline{2}{.5ex}{0mm}{5pc}{}}
\def\l@subsection{\@tocline{2}{0pt}{2em}{5pc}{}}
\def\Z{\mathbb{Z}}
\def\Q{\mathbb{Q}}
\def\R{\mathbb{R}}
\def\d{\partial}
\def\+{\oplus}
\theoremstyle{plain}
\newtheorem{theorem}{Theorem}[section]
\newtheorem{theoremalpha}{Theorem}
\newtheorem{corollaryalpha}[theoremalpha]{Corollary} 
\newtheorem{proposition}[theorem]{Proposition}
\newtheorem{lemma}[theorem]{Lemma}
\newtheorem{corollary}[theorem]{Corollary}
\newtheorem*{zerosurgeryobstruction}{Zero Surgery Obstruction}
\newtheorem*{Homology Cobordism Invariance}{Homology Cobordism Invariance}
\theoremstyle{definition}
\newtheorem{question}{Question}
\newtheorem{remark}[theorem]{Remark}
\newtheorem*{acknowledgement}{Acknowledgements}
\numberwithin{equation}{section}
\newtheorem*{organization}{Organization of the paper}
\DeclareMathOperator{\arf}{Arf}
\newcommand{\zee}{{\mathbb Z}}
\def\to{\mathchoice{\longrightarrow}{\rightarrow}{\rightarrow}{\rightarrow}}
\newcommand{\shortxra}[2][]{\ext@arrow 0359\rightarrowfill@{#1}{#2}}
\def\longrightarrowfill@{\arrowfill@\relbar\relbar\longrightarrow}
\newcommand{\longxra}[2][]{\ext@arrow 0359\longrightarrowfill@{#1}{#2}}
\renewcommand{\xrightarrow}[2][]{\mathchoice{\longxra[#1]{#2}}%
	{\shortxra[#1]{#2}}{\shortxra[#1]{#2}}{\shortxra[#1]{#2}}}
\begin{document}

\title[3-manifolds that cannot be obtained by 0-surgery on a knot]
{Irreducible 3-manifolds that cannot be obtained by 0-surgery on a knot}


\author{Matthew Hedden}
\address{Department of Mathematics\\
	Michigan State University\\
	MI 48824\\
	USA
}
\email{mhedden@math.msu.edu}


\author{Min Hoon Kim}
\address{
	School of Mathematics\\
	Korea Institute for Advanced Study \\
	Seoul 02455\\
	Republic of Korea
}
\email{kminhoon@kias.re.kr}

\author{Thomas E.\ Mark}
\address{
	Department of Mathematics\\
	University of Virginia\\
	VA 22903\\
	USA
}
\email{tmark@virginia.edu}

\author{Kyungbae Park}
\address{
	Department of Mathematical Sciences\\
	Seoul National University \\
	Seoul 08826\\
	Republic of Korea
}
\email{kyungbaepark@snu.ac.kr}


\subjclass[2010]{57M25, 57M27, 57R58, 57M05}

\date{}

\dedicatory{}

\begin{abstract}
	We give two infinite families of examples of closed, orientable, irreducible 3-manifolds $M$ such that $b_1(M)=1$ and $\pi_1(M)$ has weight 1, but $M$ is not the result of Dehn surgery along a knot in the 3-sphere. This answers a question of Aschenbrenner, Friedl and Wilton, and provides the first examples of irreducible manifolds with $b_1=1$ that are known not to be surgery on a knot in the 3-sphere.  One family consists of Seifert fibered 3-manifolds, while each member of the other family is not even homology cobordant to any Seifert fibered 3-manifold. None of our examples are homology cobordant to any manifold obtained by Dehn surgery along a knot in the 3-sphere.
\end{abstract}

\maketitle


\section{Introduction}
It is a well-known theorem of Lickorish \cite{Lickorish:1962-1} and Wallace \cite{Wallace:1960-1} that every closed, oriented 3-manifold is obtained by Dehn surgery on a link in the three-sphere. This leads one to wonder how the complexity of a $3$-manifold is reflected in the links which yield it through surgery, and conversely.  A natural yet difficult goal in this vein is to determine the minimum number of components of a link on which one can perform surgery to produce a given 3-manifold.  In particular, one can ask which 3-manifolds are obtained by Dehn surgery on a \emph{knot} in $S^3$.  If, following \cite{Auckly:1997-1}, we define the {\it surgery number} $DS(Y)$ of a closed 3-manifold $Y$ to be the smallest number of components of a link in $S^3$ yielding $Y$ by (Dehn) surgery, we ask for conditions under which $DS(Y) >1$.

The fundamental group provides some information on this problem. Indeed, if a closed, oriented 3-manifold $Y$ has $DS(Y) = 1$, then the van Kampen theorem implies that $\pi_1(Y)$ is normally generated by a single element (which is represented by a meridian of $K$). In particular, $\pi_1(Y)$ has weight one and $H_1(Y;\Z)$ is cyclic. (Recall that the \emph{weight} of a group $G$ is the minimum number of normal generators of $G$.)

A  more sophisticated topological obstruction to being surgery on a knot comes from essential 2-spheres in 3-manifolds. While Dehn surgery on a knot can produce a non-prime 3-manifold, the \emph{cabling conjecture} \cite[Conjecture~A]{Acuna-Short:1986} asserts that this is quite rare and occurs only in the case of $pq$-surgery on a $(p,q)$-cable knot.  It would imply, in particular, that a non-prime 3-manifold obtained by surgery on a knot in $S^3$ has only two prime summands, one of which is a lens space.  Deep work of   Gordon-Luecke \cite[Corollary~3.1]{Gordon-Luecke:1989-1} and Gabai \cite[Theorem~8.3]{Gabai:1987-3} verify this in the case of  homology  spheres and homology $S^1\times S^2$'s, respectively, showing more generally that  if such a manifold is obtained by surgery on a non-trivial knot, then  $Y$ is irreducible.

It is natural to ask whether these conditions are sufficient to conclude that $Y$ is obtained from $S^3$ by Dehn surgery on a knot. In the case of homology 3-spheres, Auckly \cite{Auckly:1997-1} used  Taubes'  end-periodic diagonalization theorem \cite[Theorem~1.4]{Taubes} to give examples of hyperbolic, hence irreducible, homology 3-spheres with $DS(Y)>1$. It remains  unknown, however, if any of Auckly's examples have weight-one fundamental group. More recently, Hom, Karakurt and Lidman \cite{Hom-Karakurt-Lidman:2016-1} used Heegaard Floer homology to obstruct infinitely many irreducible Seifert fibered  homology 3-spheres with  weight-one fundamental groups from being obtained by Dehn surgery on a knot.   In \cite{Hom-Lidman:2016-1}, Hom and Lidman gave infinitely many such hyperbolic examples, as well as infinitely many examples with arbitrary JSJ decompositions. Currently, we do not know whether the examples of \cite{Hom-Lidman:2016-1} have weight-one fundamental groups or not.

It is interesting to note, however, that a longstanding open problem of Wiegold (\cite[Problem 5.52]{Mazurov-Khukhro:2014-1} and \cite[Problem 15]{Gersten:1987-1}) asks whether {\it every} finitely presented perfect group has weight one. The question would be answered negatively if there is a homology 3-sphere whose fundamental group has weight $\geq 2$. 

Using $\Q/\Z$-valued linking form and  their surgery formulae for Casson invariant, Boyer and Lines \cite[Theorem~5.6]{Boyer-Lines:1990-1} gave infinitely many irreducible homology lens spaces which have weight-one fundamental group, but are not obtained by Dehn surgery on a knot. In \cite{Hoffman-Walsh:2015-1}, Hoffman and Walsh gave infinitely many hyperbolic examples of this sort.

For the case that $Y$ is a homology $S^1\times S^2$, significantly less is known.  Aschenbrenner, Friedl and Wilton \cite{Aschenbrenner-Friedl-Wilton:2015-1} asked the following question. 

\begin{question}[{\cite[Question 7.1.5]{Aschenbrenner-Friedl-Wilton:2015-1}}]\label{question:AFW}Let $M$ be a closed, orientable, irreducible $3$-manifold such that $b_1(M) = 1$ and $\pi_1(M)$ has weight $1$. Is $M$ the result of Dehn surgery along a knot in $S^3$?
\end{question}

Note that if $M$ as in the question does arise from surgery on a knot in $S^3$ then necessarily the surgery coefficient is zero.

The purpose of this paper is to give two families of examples that show the answer to Question \ref{question:AFW} is negative. The first family shows that there exist homology $S^1\times S^2$'s not smoothly homology cobordant to any Seifert manifold or to zero surgery on a knot; we recall that two closed, oriented 3-manifolds $M$ and $N$ are \emph{homology cobordant} if there is a smooth oriented cobordism $W$ between them for which the inclusion maps $M\hookrightarrow W \hookleftarrow N$ induce isomorphisms on integral homology.

\begin{theoremalpha}\label{theorem:A} The family of closed, oriented  $3$-manifolds $\{M_k\}_{k\geq1}$ described by the surgery diagram in Figure \ref{figure:Hedden-Mark} satisfies the following.
	\begin{enumerate}		
		\item $M_k$ is irreducible with first homology $\zee$ and $\pi_1(M_k)$ of weight $1$.
		\item $M_k$ is not the result of Dehn surgery along a knot in $S^3$.
		\item $M_k$ is not homology cobordant to Dehn surgery along a knot in $S^3$.
		\item\label{item:Seifert} $M_k$ is not homology cobordant to any Seifert fibered $3$-manifold.
		
		\item $M_k$ is not homology cobordant to $M_l$ if $k\neq l$.
	\end{enumerate}
\end{theoremalpha}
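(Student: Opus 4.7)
The plan is to dispatch item (1) by direct analysis of the surgery diagram, then to attack items (2)--(5) via Heegaard Floer--theoretic invariants of $M_k$ that are both homology cobordism invariants and sharply constrained whenever the underlying manifold is zero-surgery on a knot in $S^3$ or is Seifert fibered. Once those invariants are in place, the proof reduces to a computation on $M_k$ from the surgery presentation in Figure~\ref{figure:Hedden-Mark}.

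For item (1), I would read off the linking matrix of the surgery link in Figure~\ref{figure:Hedden-Mark} to conclude $H_1(M_k;\Z)\cong\Z$, perform Kirby moves to exhibit a meridian of one component as a normal generator of $\pi_1(M_k)$ (so that $\pi_1$ has weight one), and establish irreducibility either by recognizing an explicit JSJ decomposition of $M_k$ visible in the diagram (with irreducible Seifert or hyperbolic pieces glued along incompressible tori) or by a direct incompressible--surface argument ruling out an essential $2$-sphere.

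The crux is items (2)--(5), for which the strategy is to prove and apply a Zero Surgery Obstruction and a Seifert Fibered Obstruction, each taking the form: a chosen Heegaard Floer invariant of a homology $S^1\times S^2$ is a homology cobordism invariant, and is sharply constrained when the manifold is zero-surgery on a knot, respectively Seifert fibered. Because $b_1(M_k)=1$, the natural candidate invariants are not the classical $d$-invariants but rather their twisted-coefficient analogues, or else correction terms of finite cyclic covers in appropriately chosen $\Spin^c$ structures. The zero-surgery constraint should be extracted from the Ozsv\'ath--Szab\'o integer surgery formula, which builds the Floer chain complex of zero-surgery from the knot Floer complex of $K$ and thereby forces rigid symmetries on the invariant; the Seifert constraint should follow from the Ozsv\'ath--Szab\'o/N\'emethi description of Floer homology of negative-definite plumbings. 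With both obstructions in hand, I would compute the invariant on $M_k$ from its surgery presentation on $S^3$, show that the computed value violates both constraints, and then extract a numerical refinement varying monotonically with $k$ to handle item (5). Since the obstruction is a cobordism invariant, this rules out the entire cobordism class, yielding (3) and (4) as well as (2).

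The main obstacle will be the combination of the $b_1=1$ surgery computation with the precise formulation of both obstructions. The usual mapping cone formula is considerably more intricate with twisted coefficients or when one has to track the covering structure, and the output of the computation must simultaneously violate the zero-surgery constraint, violate the Seifert constraint, and take distinct values for distinct $k$. Achieving all three for a single explicit family is what forces the specific diagram in Figure~\ref{figure:Hedden-Mark}, and I expect the bulk of the technical effort to lie in extracting these numerical invariants and in proving that the rigidity statements are sharp enough to be violated.
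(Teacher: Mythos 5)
Your overall architecture---a Heegaard Floer invariant that is a homology cobordism invariant, is constrained for zero-surgeries and for Seifert fibered manifolds, and is then computed on $M_k$ and shown to violate both constraints---matches the paper's, and your treatment of item (1) (meridian as normal generator, irreducibility from a decomposition along incompressible tori) is close in spirit to the paper's argument, which realizes $M_k$ as a splice of the complements of $T_{2,3}$ and $T_{2,4k-1}$ and deduces weight one and irreducibility from general facts about splices. However, there are two genuine problems with the plan for items (2)--(5).

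First, your claim that because $b_1(M_k)=1$ the relevant invariants cannot be the classical $d$-invariants and must instead be twisted-coefficient correction terms or correction terms of cyclic covers is a misdirection. For a $3$-manifold $Y$ with $H_1(Y;\Z)\cong\Z$ and its torsion spin$^c$ structure, $HF^\infty$ is standard of rank two, and Ozsv\'ath--Szab\'o's untwisted correction terms $d_{\pm 1/2}(Y)$ are already defined, are homology cobordism invariants, and satisfy $d_{1/2}\leq \frac{1}{2}$ and $d_{-1/2}\geq -\frac{1}{2}$ whenever $Y$ bounds a homology $S^2\times D^2$, in particular whenever $Y$ is zero-surgery on a knot. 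These are precisely the invariants used: $d_{-1/2}(M_k)=-\frac{5}{2}$ gives items (2)--(3), and $d_{1/2}(M_k)=-2k+\frac{1}{2}$ gives item (5). Pursuing twisted analogues or covers sends you into substantially harder territory for no gain, and you give no indication of how those alternative invariants would be computed or constrained. Second, and more seriously, the proposal contains neither the computation nor a workable route to the Seifert constraint. The heart of the proof of (2)--(5) is the identification of $M_k$ as Seifert-framed surgery on a knot $J_k\subset S^3_1(T_{2,3})$ that is the connected sum of the surgery dual $\mu$ of $T_{2,3}$ with $T_{2,4k-1}$, the determination of $CFK^\infty(S^3_1(T_{2,3}),\mu)\simeq CFK^\infty(T_{2,-3})[-2]$, and a K\"unneth/stable-equivalence argument reducing everything to $CFK^\infty(T_{2,4k-3})$; none of this appears in your outline. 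For the Seifert obstruction, the N\'emethi-style computation of the Floer homology of plumbings that you propose is both much harder than necessary and not obviously capable of yielding a uniform bound over all Seifert fibered homology $S^1\times S^2$'s. The actual mechanism is that such a manifold bounds a negative semi-definite, simply connected plumbing with \emph{both} orientations, so the same inequality that constrains zero-surgeries (in the Levine--Ruberman form $d_{-1/2}\geq-\frac{1}{2}$ for manifolds bounding negative semi-definite simply connected $4$-manifolds) applies to $M$ and to $-M$. Until the invariant is correctly identified and actually computed on $M_k$, the proposal does not establish any of items (2)--(5).
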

\begin{figure}[htb!]
	\centering
	\includegraphics[scale=1]{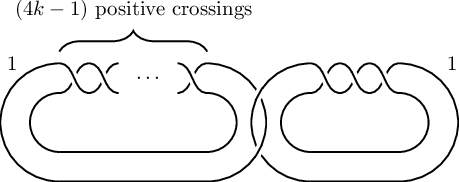}
	\caption{A surgery diagram of $M_k$ ($k\geq 1)$.}
	\label{figure:Hedden-Mark}
\end{figure}

The first property of $M_k$ is relatively elementary; in particular it follows from some general topological results about spliced manifolds. As we show in the next section, any splice of non-trivial knot complements in the 3-sphere is irreducible and has weight one fundamental group, from which our claims about $M_k$ will follow.

To show that $M_k$ is not the result of Dehn surgery along a knot in $S^3$, we use a Heegaard Floer theoretic obstruction developed by 
Ozsv\'{a}th and Szab\'{o}  in \cite{Ozsvath-Szabo:2003-2}. They showed that certain numerical  ``correction terms" $d_{1/2}$ and $d_{-1/2}$ satisfy 
\begin{equation}\label{dconstraints}
d_{1/2}(M)\leq \tfrac{1}{2}\quad\mbox{and}\quad d_{-1/2}(M)\geq -\tfrac{1}{2}
\end{equation}
whenever $M$ is obtained from 0-surgery on a knot in $S^3$ (see Theorem~\ref{thm:OS}). We will show that $d_{-1/2}(M_k)=-\frac{5}{2}$, and hence $M_k$ is not the result of Dehn surgery on a knot in~$S^3$.     The correction terms are actually invariants of homology cobordism, from which it follows that none of the $M_k$ are even homology cobordant to surgery on a knot in~$S^3$. This feature of our examples distinguishes it from the analogous gauge and Floer theoretic results for homology spheres mentioned above.  Indeed, the techniques of Auckly or Hom, Lidman, Karakurt are not invariant under homology cobordism; in the former, this is due to a condition on $\pi_1$ in Taubes' result on end periodic manifolds, and in the latter because the reduced Floer homology is not invariant under homology cobordism (though see \cite{Hendricks-Hom-Lidman:2018} for some results in that direction).

To show that our examples $M_k$ are not homology cobordant to any Seifert fibered 3-manifold, we prove a general result, Theorem~\ref{theorem:dofSeifert}, about the correction terms of Seifert fibered 3-manifold $M$ with first homology $ \Z$: we show that any Seifert manifold with the homology of $S^1\times S^2$ satisfies the same constraints \eqref{dconstraints} as the result of 0-surgery does. Part $(4)$ of our theorem immediately follows.  We remark that it was only recently shown by Stoffregen (preceded by unpublished work of Fr{\o}yshov) that there exist homology 3-spheres that are not homology cobordant to Seifert manifolds, or equivalently that not every element of the integral homology cobordism group is represented by a Seifert manifold. To be precise, Stoffregen showed in \cite[Corollary~1.11]{Stoffregen:2015-1} that $\Sigma(2,3,11)\#\Sigma(2,3,11)$ is not homology cobordant to any Seifert fibered homology 3-sphere by using homology cobordism invariants from Pin(2)-equivariant Seiberg-Witten Floer homology.    

\subsubsection*{Hyperbolic examples}

For any closed, orientable 3-manifold $M$ with a chosen Heegaard splitting, Myers gives an explicit homology cobordism from $M$ to a hyperbolic, orientable 3-manifold  \cite{Myers:1983-1}. By using these homology cobordisms, we can obtain hyperbolic, orientable 3-manifolds $Z_k$ with first homology $\Z$ which are homology cobordant to~$M_k$. Since $d_{-1/2}$ is a homology cobordism invariant, $Z_k$ is also not the result of Dehn surgery along a knot in $S^3$ by Theorem~\ref{thm:OS}. 

\begin{corollaryalpha}There is a family of closed, orientable irreducible $3$-manifolds $\{Z_k\}_{k\geq 1}$ satisfying the following.
	\begin{enumerate}
		\item $Z_k$ is hyperbolic with first homology $\zee$.
		\item $Z_k$ is not the result of Dehn surgery along a knot in $S^3$.
		\item $Z_k$ is not homology cobordant to any Seifert fibered $3$-manifold.
		
		\item $Z_k$ is not homology cobordant to $Z_l$ if $k\neq l$.
	\end{enumerate}
\end{corollaryalpha}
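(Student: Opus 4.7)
The plan is to apply Myers' construction to each $M_k$ and then transfer every desired property of $M_k$ across the resulting homology cobordism. Concretely, for each $k$ I would fix a Heegaard splitting of $M_k$ and invoke Myers \cite{Myers:1983-1} to produce a smooth oriented cobordism $W_k$ from $M_k$ to a closed, orientable, hyperbolic 3-manifold $Z_k$ such that both inclusions induce isomorphisms on integral homology.

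For property (1), $Z_k$ is hyperbolic by construction, and is automatically irreducible: a closed hyperbolic 3-manifold is aspherical, so has $\pi_2=0$, and thus contains no essential 2-sphere by the sphere theorem. The first homology is $\Z$ because $W_k$ is a homology cobordism and $H_1(M_k;\Z)=\Z$. For property (2), the correction term $d_{-1/2}$ is an invariant of integer homology cobordism, so $d_{-1/2}(Z_k)=d_{-1/2}(M_k)=-\tfrac{5}{2}$, which violates the upper bound in Theorem~\ref{thm:OS}. Since any surgery description of $Z_k$ on a knot in $S^3$ would force the coefficient to be $0$ (by the $H_1$ computation), this rules out Dehn surgery on every knot in $S^3$.

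Properties (3) and (4) follow by stacking cobordisms. If $Z_k$ were homology cobordant to a Seifert fibered manifold $N$, then gluing that cobordism onto $W_k$ would produce a homology cobordism from $M_k$ to $N$, contradicting Theorem~\ref{theorem:A}(4). Likewise, a homology cobordism from $Z_k$ to $Z_l$ could be concatenated with $W_k$ and the reverse of $W_l$ to give a homology cobordism from $M_k$ to $M_l$, contradicting Theorem~\ref{theorem:A}(5).

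There is no real obstacle in this argument: once Myers' theorem is taken as a black box, every property claimed for $Z_k$ either is built into his construction or is a homology cobordism invariant, and therefore descends directly from the corresponding property of $M_k$ established in Theorem~\ref{theorem:A}.
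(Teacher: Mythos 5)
Your proposal is correct and follows essentially the same route as the paper: invoke Myers' homology cobordism to a hyperbolic manifold and transfer the properties of $M_k$ using homology cobordism invariance of the correction terms (the paper's own argument is just a terser version of this, given in the paragraph preceding the corollary). The only quibble is that $d_{-1/2}(Z_k)=-\tfrac{5}{2}$ violates the \emph{lower} bound $d_{-1/2}\geq-\tfrac12$ from Theorem~\ref{thm:OS}, not an upper bound.
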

Myers' cobordisms may not preserve the weight of the fundamental groups at hand. If $\pi_1(Z_k)$ has weight one, then $Z_k$ would provide a negative answer to the following question.

\begin{question}Let $M$ be a closed, orientable, hyperbolic $3$-manifold with $b_1(M) = 1$ and $\pi_1(M)$ of weight $1$. Is $M$ the result of Dehn surgery along a knot in $S^3$?
	
\end{question}
We remark that the question is also open for integral homology 3-spheres.

\subsubsection*{Seifert examples}
From the previous remarks, it follows that the correction terms $d_{\pm 1/2}$ cannot show that a Seifert manifold with the homology of $S^1\times S^2$ has $DS> 1$. Using an obstruction based on the classical Rohlin invariant instead, we prove the following.

\begin{theoremalpha}\label{theorem:B}
	Let $\{N_k\}_{k\geq 1}$ be the family of $3$-manifolds described by the surgery diagram in Figure \ref{figure:OS}. Then
	\begin{enumerate}
		\item $N_k$ is irreducible with first homology $\zee$ and $\pi_1(N_k)$ of weight $1$.
		\item $N_k$ is a Seifert manifold over $S^2$ with three exceptional fibers.
		\item If $k$ is odd, $N_k$ is not obtained by Dehn surgery on a knot in $S^3$.
		\item If $k$ is odd, $N_k$ is not homology cobordant to Dehn surgery along a knot in $S^3$.
	\end{enumerate}
\end{theoremalpha}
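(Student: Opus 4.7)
The plan is to read parts (1) and (2) directly off the surgery diagram and to obstruct (3) and (4) using the Rohlin invariants of the two spin structures on $N_k$ together with the classical relation between $0$-surgery and the Arf invariant.

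For parts (1) and (2), I would apply slam-dunk and Rolfsen twist moves to Figure \ref{figure:OS} in order to convert the diagram into a standard rational surgery presentation of a Seifert fibered space over $S^2$ with three multiple fibers, which gives (2) directly. Irreducibility is then a standard consequence of the classification of Seifert fibered spaces, once I verify that the three exceptional fibers all have multiplicity at least two. The first homology is $\Z$ by direct computation from the linking matrix, and $\pi_1(N_k)$ is normally generated by a meridian of the $0$-framed component, so it has weight one.

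For (3), the key classical input is that if $M = S^3_0(K)$, then the $0$-trace $W_0(K)$ is a simply connected spin $4$-manifold of signature zero bounding $M$, and its unique spin structure restricts to a spin structure $\mathfrak{s}_0$ on $M$ with Rohlin invariant $\mu(M,\mathfrak{s}_0)=0\in\Z/2$; the other spin structure on $M$ has Rohlin invariant $\arf(K)\pmod 2$. Therefore the unordered pair of Rohlin invariants of a $0$-surgery manifold must be either $\{0,0\}$ or $\{0,1\}$, and in particular at least one of them vanishes. To establish (3), I would compute both Rohlin invariants of $N_k$ from the plumbed $4$-manifold $X_k$ associated to the surgery diagram, using the classification of spin structures on $X_k$ by characteristic (co)homology classes and the standard formula expressing the Rohlin invariant in terms of $\sigma(X_k)$ and the square of the characteristic vector. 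My expectation is that for $k$ odd, both spin structures on $N_k$ yield Rohlin invariant $1\in\Z/2$, which is inconsistent with the pattern above and rules out $N_k = S^3_0(K)$.

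Part (4) then follows from the spin homology cobordism invariance of the Rohlin invariant. If $W$ is an integer homology cobordism between $N_k$ and some $M = S^3_0(K)$, a Poincar\'e--Lefschetz duality argument shows that $w_2(TW)\in H^2(W;\Z/2)$ vanishes, so $W$ is spin, and restriction gives a bijection between the spin structures of $N_k$ and those of $M$ preserving Rohlin invariants by additivity of signatures. Both Rohlin invariants on $M$ would then equal $1\pmod 2$, contradicting the vanishing of $\mu(M,\mathfrak{s}_0)$. The main obstacle I anticipate is the bookkeeping in the spin-structure computation: correctly pairing characteristic sublinks with the two spin structures on $N_k$ and verifying that the parity of $k$ is exactly what forces both Rohlin invariants to equal $1$. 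A secondary technical point is confirming that the Seifert presentation obtained from the diagram genuinely has three multiplicity-$>1$ fibers, so that the irreducibility claim in (1) is not vacuous and $N_k$ is not secretly a lens space.
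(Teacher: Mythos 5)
Your strategy for parts (2)--(4) is essentially the one the paper uses: Kirby moves to exhibit the Seifert structure with three exceptional fibers, irreducibility from the classification of reducible orientable Seifert manifolds (only $S^1\times S^2$ and $\R\mathbb{P}^3\#\R\mathbb{P}^3$ occur, both excluded here), and the Rohlin obstruction --- at least one Rohlin invariant of a $0$-surgery on a knot in $S^3$ vanishes, the other being $\arf(K)$ --- computed from Wu classes in the plumbing $P_k$. The paper finds $\sigma(P_k)=-3$ and $\nu_i.\nu_i=-8k-3$ for both Wu classes, so both Rohlin invariants equal $k\bmod 2$, confirming your expectation for $k$ odd. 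Your argument for (4), that an integral homology cobordism $W$ is automatically spin because $H^2(W;\Z/2)\to H^2(N_k;\Z/2)$ is injective and $w_2(TW)$ restricts to $w_2(N_k)=0$, is correct and somewhat more explicit than the paper's one-line appeal to homology cobordism invariance. One caution on the computation in (3): the formula $\mu=\tfrac18\bigl(\sigma(P_k)-\nu.\nu\bigr)$ requires the characteristic class $\nu$ to be represented by a disjoint union of embedded \emph{spheres}; for a general characteristic surface there is an additional Arf correction term, so you must verify (as the paper does) that the Wu classes are spherical.

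The genuine gap is in part (1), in the claim that $\pi_1(N_k)$ ``is normally generated by a meridian of the $0$-framed component.'' For a surgery presentation on a link with several components, $\pi_1$ of the resulting manifold is normally generated by the \emph{set} of meridians, one per component; it is not automatic that a single meridian suffices, and the weight-one property is precisely the nontrivial content of this part of the statement. The paper proves it by writing down the standard presentation of $\pi_1(N_k)$ arising from the Seifert structure with exceptional fibers of slopes $\frac{8k-3}{16k-2}$, $\frac{1}{8k-1}$, $\frac{1}{2}$, and exhibiting an explicit element ($x_2^{4k-2}x_1^{-1}$ in its notation) whose normal closure kills the group --- a genuine computation that your proposal does not supply and that cannot be replaced by the unsupported assertion about meridians. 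You would need either to carry out such a presentation-level computation or to otherwise certify a single normal generator.
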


\begin{figure}[htb!]
	\centering
	\includegraphics[scale=1]{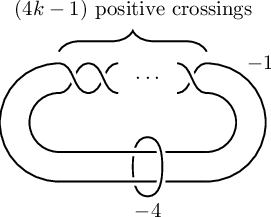}
	\caption{A surgery diagram of $N_k$ $(k\geq 1)$.}
	\label{figure:OS}
\end{figure}

Independent of questions involving weight or homology cobordism, our results provide the first known examples of irreducible homology $S^1\times S^2$'s which are not homeomorphic to surgery on  a knot in $S^3$.   To clarify the literature, it is worth mentioning here that in \cite[Section~10.2]{Ozsvath-Szabo:2003-2} Ozsv\'{a}th and Szab\'{o} argued based on the correction term obstruction that the manifold $N_1$ shown in Figure \ref{figure:OS} is not the result of Dehn surgery on a knot in $S^3$. Unfortunately, as we mentioned above, since $N_1$ is Seifert fibered the correction terms do not actually provide obstructions to $DS = 1$. We point out in Section \ref{section:Ozsvath-Szabo} where their calculation goes astray.

\begin{organization} In the next section, we establish some topological results on spliced manifolds which we will apply to our examples $M_k$.  In Section~\ref{section:background}, we briefly recall the relevant background on Heegaard Floer correction terms and the zero surgery obstruction of Ozsv\'{a}th and Szab\'{o}. Section~\ref{section:dofM_k} is devoted to computation of the correction terms of $M_k$, whose values imply they are not zero surgery on knots in $S^3$ and have the stated homology cobordism properties.  In Section \ref{section:Seifert} we prove the estimates on the correction terms of Seifert manifolds and finish the proof of Theorem \ref{theorem:A}. Section \ref{section:rokhlin} shows how the Rohlin invariant gives a different obstruction to $DS = 1$, and in Section \ref{section:Ozsvath-Szabo} we prove Theorem \ref{theorem:B}.
\end{organization}

\begin{acknowledgement}The authors would like to thank  Marco Golla and Jennifer Hom for their helpful comments and encouragement. Especially, Marco Golla gave several valuable comments on the correction terms of $N_1$ which are reflected in Section~\ref{section:Ozsvath-Szabo}. Part of this work was done while  Min Hoon Kim was visiting Michigan State University, and he thanks MSU for its generous hospitality and support.  We also thank the University of Virginia for supporting an extended visit by Matthew Hedden in 2007, which led to the discovery of the manifolds in Theorem \ref{theorem:A}.   Matthew Hedden's work on this project was partially supported by NSF CAREER grant DMS-1150872,  DMS-1709016, and an NSF postdoctoral fellowship. Min Hoon Kim was partially supported by the POSCO TJ Park Science Fellowship. Thomas Mark was supported in part by a grant from the Simons Foundation (523795, TM). Kyungbae Park was partially supported by Basic Science Research Program through the National Research Foundation of Korea (NRF, F2018R1C1B6008364).
\end{acknowledgement}

\section{Some topological preliminaries}\label{section:topology}	

In this section we verify the topological features---irreducibility and weight one fundamental group---of the manifolds $M_k$ in Theorem \ref{theorem:A}.   These features are  consequences of the fact that the manifolds are obtained by a splicing operation. Thus we establish some general results for manifolds obtained through this construction.

Given two oriented $3$-manifolds with torus boundary, $X_1,X_2$, we will refer to any manifold obtained from them by identifying their boundaries by an orientation reversing diffeomorphism as a {\em splice} of $X_1$ and $X_2$.  Of course the homeomorphism type of a splice depends intimately on the choice of diffeomorphism, but this choice will be irrelevant for the topological results that follow.  Note that with this definition Dehn filling is a splice with the unknot complement in $S^3$.  We begin with the following observation, which indicates that the manifolds appearing in Theorem \ref{theorem:A} are splices.  

\begin{proposition}\label{prop:splice}  Let $L$ be the result of connected summing the components of the Hopf link with knots $K_1$ and $K_2$, respectively.  Then any integral surgery on $L$ is a splice of the complements of $K_1$ and $K_2$.
\end{proposition}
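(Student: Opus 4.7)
The plan is to exhibit the splice structure directly via the connect-sum $2$-spheres. Let $B_i' \subset S^3$ be the small ball in which $L_i = H_i \# K_i$ is formed, so that $S_i = \partial B_i'$ meets $L$ in two points of $L_i$ and is disjoint from $L_j$ for $j \neq i$. Inside $B_i'$ the link consists of a single knotted arc of type $K_i$, so the tangle exterior $X_i := B_i' \cap E(L)$ is homeomorphic to $E(K_i)$ by a standard collar argument (the removed trivial arc in $E(K_i)$ contributes only a product collar, which is absorbed). Outside the balls, $L$ coincides with the Hopf link, so $Y := (S^3 \setminus (B_1' \cup B_2')) \cap E(L)$ is the complement in $S^2 \times I$ of two linked arcs, each with both endpoints on one of the boundary spheres. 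Thus $E(L) = X_1 \cup_{\alpha_1} Y \cup_{\alpha_2} X_2$, where $\alpha_i = S_i \cap E(L)$ is an annulus.

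After integer surgery on $L$ with slopes $(p_1, p_2)$, the closed manifold $M = E(L) \cup V_1 \cup V_2$ decomposes as $M = X_1 \cup_{\partial X_1} W \cup_{\partial X_2} X_2$, where $W := Y \cup V_1 \cup V_2$ has boundary $\partial X_1 \sqcup \partial X_2$. The claim that $W \cong T^2 \times I$ immediately implies the proposition: one absorbs $W$ as a collar into one of the $X_i$, identifying $M$ with $X_1 \cup_\phi X_2 = E(K_1) \cup_\phi E(K_2)$ for some orientation-reversing diffeomorphism $\phi$ of the torus, which is by definition a splice.

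The main obstacle is establishing $W \cong T^2 \times I$. The key observation is that $W$ depends only on the slopes $(p_1, p_2)$, not on the knots $K_i$: the manifold $Y$ is determined by the Hopf link in $S^2 \times I$, and the attaching of $V_i$ to $Y$ along the annulus $\beta_i := \partial N(L_i) \cap Y$ depends only on the slope restricted to $\beta_i$, which equals the $p_i$-slope on the unknot $H_i$ since $(\mu_i, \lambda_i)$ agrees with $(\mu_{H_i}, \lambda_{H_i})$ outside $B_i'$. Thus it suffices to verify $W \cong T^2 \times I$ in the base case $K_1 = K_2 = \mathrm{unknot}$, where $L = H$ is the Hopf link itself and $M$ is the lens space (or $S^3$ or $S^1 \times S^2$) resulting from $(p_1, p_2)$-surgery. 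In that case $X_1$ and $X_2$ are solid tori, and the sandwich $M = X_1 \cup W \cup X_2$ realizes $M$ as a union of two solid tori with a middle cobordism $W$; by uniqueness (up to isotopy) of genus-one Heegaard splittings of lens spaces, together with a Mayer--Vietoris computation giving $H_1(W) = \mathbb{Z}^2$ and $H_2(W) = \mathbb{Z}$, the cobordism $W$ must be homeomorphic to $T^2 \times I$, completing the argument.
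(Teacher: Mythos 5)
Your decomposition $M = X_1\cup_{T_1}W\cup_{T_2}X_2$ along the connect-sum spheres is correct and runs parallel to the paper's own argument, as is the observation that the homeomorphism type of $W=Y\cup V_1\cup V_2$ depends only on the surgery slopes. The gap is in the final step, where you assert that in the Hopf link case $W\cong T^2\times I$ by ``uniqueness of genus-one Heegaard splittings together with a Mayer--Vietoris computation.'' This is not a valid deduction: $X_1\cup W\cup X_2$ is not a Heegaard splitting unless $W$ is already known to be a product (you would first need $X_1\cup W$ and $W\cup X_2$ to be solid tori), so uniqueness of splittings gives no purchase on $W$, and the homology of $W$ cannot detect the conclusion. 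Decisively, your argument at this stage never uses integrality of the surgery coefficients, and the statement is false without it: for $p/q$-surgery on one Hopf component with $q\ge 2$, the solid torus $V_1$ is attached to $Y\cong T^2\times I$ along an annulus whose core wraps $q$ times around $V_1$, so $W$ is a Seifert fibered space over the annulus with an exceptional fiber of order $q$. That manifold has $H_1\cong\Z^2$, $H_2\cong\Z$, two torus boundary components, and sits between two solid tori inside a lens space, yet its fundamental group is non-abelian, so $W\not\cong T^2\times I$. Hence the ingredients you list cannot suffice.

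The missing idea is precisely where integrality enters, and it is already implicit in your setup: the core of the annulus $\beta_i=\partial N(L_i)\cap Y$ is a meridian $\mu_i$ of $L_i$, which meets the integral slope $p_i\mu_i+\lambda_i$ (the boundary of the meridian disk of $V_i$) geometrically once. The corresponding annulus in $\partial V_i$ is therefore longitudinal, i.e.\ the pair $(V_i,\beta_i)$ is homeomorphic to $\bigl(\beta_i\times[0,1],\,\beta_i\times\{0\}\bigr)$, and attaching $V_i$ to $Y$ is just attaching a collar along $\beta_i\subset\partial Y$. This gives $W\cong Y\cong E(\text{Hopf link})\cong T^2\times I$ directly, with no need for the reduction to the unknot case or any Heegaard splitting input. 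This is the same mechanism as the paper's proof, which phrases it by saying that after integral surgery the meridian of each component is isotopic to the core of the surgery solid torus, so deleting the meridians returns the Hopf link exterior $T^2\times[0,1]$.
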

\begin{proof} The connected sum operation can be viewed as a splicing operation.  More precisely, the connected sum of a link component with a knot $K$ is obtained by removing a neighborhood of the  meridian of the component and gluing the complement of $K$ to it by the diffeomorphism which interchanges  longitudes and meridians.  Thus the result of integral surgery on $L$ is diffeomorphic to integral surgery on the Hopf link, followed by the operation of gluing the complements of $K_1$ and $K_2$ to the complements of the meridians of the Hopf link.  But the meridians of the components of the Hopf link, viewed within the surgered manifold, are isotopic to the cores of the surgery solid tori since the surgery slopes are integral.  Thus, upon removing the meridians, we arrive back at the complement of the Hopf link, which is homeomorphic to $T^2\times [0,1]$.  The manifold at hand, then, is obtained by gluing the boundary tori of the complements of $K_1$ and $K_2$ to the boundary components of a thickened torus.  The result follows immediately.
\end{proof}

We next prove that splices of knot complements in the 3-sphere have fundamental groups of weight one.  This follows from a basic result about pushouts of groups.

\begin{proposition}\label{prop:pushoutweight} Suppose that $G_1$ and $G_2$ are groups which are normally generated by elements $g_1$ and $g_2$, respectively, and that $\phi_i:H\rightarrow G_i$ are homomorphisms.  If the image of $\phi_1$ contains $g_1$, then the pushout $G_1\ast_H G_2$ is normally generated by a single element\textup{;} namely, the image of $g_2$ under the defining map $G_2\rightarrow G_1\ast_H G_2$.
\end{proposition}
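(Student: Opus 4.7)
The plan is to show directly that the normal closure of the image of $g_2$ in the pushout $G = G_1 \ast_H G_2$ is all of $G$. Let $\iota_i \colon G_i \to G$ denote the canonical maps from the pushout construction, so that $\iota_1 \circ \phi_1 = \iota_2 \circ \phi_2$, and let $N \trianglelefteq G$ be the normal closure of $\iota_2(g_2)$.

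The key steps I would carry out are as follows. First, since $g_2$ normally generates $G_2$ and $\iota_2$ is a homomorphism, the image $\iota_2(G_2)$ is contained in the normal closure of $\iota_2(g_2)$, hence in $N$. Second, I would use the defining relation of the pushout: for every $h \in H$ we have $\iota_1(\phi_1(h)) = \iota_2(\phi_2(h)) \in \iota_2(G_2) \subseteq N$. In particular, the subgroup $\iota_1(\phi_1(H))$ lies in $N$. Third, by hypothesis $g_1 \in \phi_1(H)$, so $\iota_1(g_1) \in N$. Since $g_1$ normally generates $G_1$ and $N$ is a normal subgroup of $G$, it follows that $\iota_1(G_1) \subseteq N$. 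Finally, since $G$ is generated (as a group) by $\iota_1(G_1) \cup \iota_2(G_2)$, we conclude $N = G$, and hence the single element $\iota_2(g_2)$ normally generates the pushout.

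There is no real obstacle here: the argument is a diagram chase using only the universal property of the pushout and the fact that homomorphic images of normal generating sets are normal generating sets for the image. The one point worth stating cleanly is that the compatibility relation $\iota_1 \circ \phi_1 = \iota_2 \circ \phi_2$ is precisely what allows $g_1$, a priori an element of $G_1$, to be reached from the $G_2$ side via the shared subgroup $H$; this is where the hypothesis ``$g_1 \in \operatorname{image}(\phi_1)$'' is used, and it is the only substantive input beyond the universal property.
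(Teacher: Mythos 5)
Your proof is correct and follows essentially the same route as the paper's: use the pushout relation $\iota_1\circ\phi_1=\iota_2\circ\phi_2$ to see that $\iota_1(g_1)$ lies in $\iota_2(G_2)$, hence in the normal closure of $\iota_2(g_2)$, and then invoke the normal generation of $G_1$ by $g_1$ and of $G_2$ by $g_2$. The only difference is presentational: you phrase everything in terms of an explicit normal subgroup $N$, whereas the paper argues more tersely with products of conjugates.
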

\begin{proof}  In the pushout, $g_1=\phi_1(x)=\phi_2(x)$.  Now $\phi_2(x)\in G_2$, hence can be written as a product of conjugates of $g_2$.  Since $g_1$ normally generates $G_1$, it follows that $g_2$ normally generates the pushout.
\end{proof}

It follows at once from van Kampen's theorem that  that any splice of  complements of knots in the 3-sphere has weight one fundamental group.  Indeed, the Wirtinger presentation shows that the fundamental group of a knot complement has weight one, normally generated by a meridian.  The homotopy class of the meridian is represented by a curve on the boundary, thereby verifying the hypothesis of the proposition.  Of course this reasoning shows more generally that the splice of a knot complement in $S^3$ with {\em any} manifold with torus boundary and weight one fundamental group also has fundamental group of weight one.

The discussion to this point shows that the manifolds $M_k$, being splices of knot complements, have weight one fundamental groups.  We turn our attention to their irreducibility.  As above, we will deduce this property from a more general result about splicing.

\vskip0.1in
Recall that a $3$-manifold is {\em irreducible} if any smoothly embedded $2$-sphere bounds a $3$-ball, and a surface $T$ in a 3-manifold is {\em incompressible} if any  embedded disk $D$ in the manifold for which $D\cap T=\partial D$ has the property that $\partial D$ bounds a disk in $T$ as well.

\begin{proposition}\label{prop:irreduciblesplice} Let $X_1$, $X_2$ be irreducible manifolds, each with an incompressible torus as boundary. Then any splice of $X_1$ and $X_2$ is irreducible.
\end{proposition}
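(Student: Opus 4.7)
The plan is to prove this by induction on the number of components of $S \cap T$, where $S$ is a smoothly embedded 2-sphere in the splice $M = X_1 \cup_T X_2$ isotoped to be transverse to $T$. The base case $|S \cap T| = 0$ is immediate: $S$ then lies in one $X_i$ and bounds a 3-ball there by the irreducibility of $X_i$, hence in $M$.

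For the inductive step I would first locate a circle of $S \cap T$ that bounds a disk on $T$ whose interior is disjoint from $S$. Any circle $C$ of $S \cap T$ that is innermost on $S$ bounds a disk $D \subset S$ with $\operatorname{int}(D) \cap T = \emptyset$, so $D$ lies in some $X_i$; the incompressibility of $T$ in $X_i$ then forces $C$ to be inessential on $T$. Among such inessential components of $S \cap T$, I would then choose one, still called $C$, whose bounding disk $D' \subset T$ is innermost on $T$, meaning $\operatorname{int}(D') \cap S = \emptyset$.

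Next I would surger $S$ along $D'$ by removing a thin open collar of $C$ from $S$ and capping the two resulting boundary circles by parallel push-offs of $D'$ on the two sides of $T$. This yields two 2-spheres $S_1, S_2 \subset M$ with $|S_1 \cap T| + |S_2 \cap T| = |S \cap T| - 1$, so the induction hypothesis provides 3-balls $B_1, B_2 \subset M$ with $\partial B_j = S_j$. The final step is to glue $B_1$ and $B_2$ to the thin product slab $D' \times [-\epsilon, \epsilon]$ across $T$ along the two capping disks to produce an embedded 3-ball in $M$ whose boundary is the original sphere $S$.

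The main obstacle I anticipate is this last reassembly step, since the inductive hypothesis only asserts that each $S_j$ bounds some 3-ball without specifying on which side of $S_j$ it lies. I would need to verify, using the local product structure of a collar neighborhood of $T$ together with the placement of the push-offs, that $B_1$ and $B_2$ can be chosen on the sides of $S_j$ opposite the slab, and that the interiors of $B_1$, $B_2$, and $D' \times [-\epsilon, \epsilon]$ are pairwise disjoint, so that $B_1 \cup B_2 \cup (D' \times [-\epsilon, \epsilon])$ is an embedded 3-ball. Once this is done, the induction closes.
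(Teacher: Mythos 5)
Your argument takes a genuinely different route from the paper's. The paper isotopes $S$: an innermost disk $D\subset S$ lies in one $X_i$, incompressibility provides a disk in $T$ with the same boundary, the resulting sphere bounds a ball in $X_i$, and pushing $S$ across that ball reduces $|S\cap T|$ until $S$ lies in a single $X_i$, where it bounds a ball by irreducibility. You instead compress $S$ along a disk of $T$ and reassemble, in the style of Alexander's proof that smooth spheres in $S^3$ bound balls. The first part of your argument is correct: a circle of $S\cap T$ innermost on $S$ is inessential on $T$ by incompressibility, a circle of $S\cap T$ lying in a disk of $T$ bounds a subdisk of it, so an innermost disk $D'\subset T$ with $\operatorname{int}(D')\cap S=\emptyset$ exists, and surgery along it produces disjoint spheres $S_1,S_2$, each meeting $T$ in strictly fewer circles.

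The gap is in the reassembly, exactly where you flag it, and the repair you propose would fail: one cannot in general arrange for $B_1$ and $B_2$ to lie on the sides of $S_1,S_2$ opposite the slab. The inductive hypothesis supplies a ball on one \emph{determined} side of each $S_j$, and the opposite side need not be a ball at all (for instance, it may contain the torus $T$, which no embedded ball contains). The correct completion is a case analysis on where the connected set $\operatorname{int}\bigl(D'\times[-\epsilon,\epsilon]\bigr)$ sits relative to $B_1$ and $B_2$. If it is exterior to both, one checks that $B_1\cap B_2=\emptyset$ and that $B_1\cup\bigl(D'\times[-\epsilon,\epsilon]\bigr)\cup B_2$ is a ball with boundary $S$, as you intend. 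If it lies in $\operatorname{int}(B_1)$ but not in $B_2$, then $S_2\subset\operatorname{int}(B_1)$ and $B_2\subset B_1$, and the ball bounded by $S$ is instead obtained by excision: $Q=\bigl(D'\times[-\epsilon,\epsilon]\bigr)\cup B_2$ is a ball meeting $\partial B_1$ in the single disk $D'\times\{-\epsilon\}$, so $\partial Q$ minus that disk is a properly embedded disk in $B_1$, which splits $B_1$ into two balls; the one other than $Q$, namely $\overline{B_1\smallsetminus Q}$, has boundary $S$. (The symmetric case is identical.) In the final case, where the slab lies in the interior of both balls, one finds $M=B_1\cup B_2$, so $M\cong S^3$ (a closed $3$-manifold that is the union of two embedded balls is a sphere) and $M$ is irreducible for trivial reasons. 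Without this case analysis the induction does not close; with it, your approach yields a complete, if longer, proof than the isotopy argument in the paper.
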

The proposition applies to the complements of non-trivial knots in the $3$-sphere, which are irreducible by Alexander's characterization of the $3$-sphere \cite{Alexander:1924} (namely, that any smooth $2$-sphere separates into two pieces, each diffeomorphic to a ball), and have incompressible boundary whenever the knot is non-trivial.

\begin{proof}  The proposition follows from a standard ``innermost disk" argument.    More precisely, let $S$ be an embedded  $2$-sphere in a splice of $X_1$ and $X_2$, and let $T$ denote the image of the boundary tori, identified within the splice.  Then $S$ intersects $T$ in a collection of embedded circles.  We claim that we can remove these circles by an isotopy of $S$.  This claim would prove the proposition since,  after the isotopy, the sphere lies entirely in $X_1$ or $X_2$, where it bounds a ball by  hypothesis.  
	
	To remove the components of $S\cap T$, consider a disk $D\subset S$ which intersects  $T$  precisely in $\partial D$ (a so-called ``innermost disk", which must exist by compactness of $S\cap T$ and the Jordan-Sch{\"o}nflies theorem).   Since $D\cap T=\partial D$, the interior of $D$ must lay entirely in one of $X_1$ or $X_2$.  Incompressibility of the boundary of these manifolds therefore implies $\partial D$ bounds a disk embedded in $T$.  The union of this latter disk with $D$ is an embedded sphere  in either $X_1$ or $X_2$, which bounds a ball by its irreducibility.  The ball can be used to isotope $S$ and remove the circle of intersection.  Inducting on the number of such circles implies our claim.  
\end{proof}

\section{Heegaard Floer theory and Ozsv\'{a}th-Szab\'{o}'s 0-surgery obstruction}\label{section:background}
In this section we briefly recall the Heegaard Floer correction terms and an obstruction they yield, due to Ozsv\'ath and Szab\'o, to a 3-manifold being obtained by $0$-surgery on a knot in $S^3$. For more detailed exposition, we refer the reader to \cite{Ozsvath-Szabo:2003-2}.

Let $\mathbb{F}$ be the field with two elements, and $\mathbb{F}[U]$ be the polynomial ring over $\mathbb{F}$. Let $Y$ be a closed oriented 3-manifold endowed with a spin$^c$ structure  $\mathfrak{s}$. Heegaard Floer homology associates to the pair $(Y,\mathfrak{s})$ several relatively graded modules over $\mathbb{F}[
U]$, $HF^\circ(Y,\mathfrak{s})$, where $\circ\in\{-,+,\infty\}$. These Heegaard Floer modules are related by a long exact sequence:
\begin{equation*}
\cdots\rightarrow HF^-(Y,\mathfrak{s})\xrightarrow{\iota} HF^\infty(Y,\mathfrak{s})\xrightarrow{\pi} HF^+(Y,\mathfrak{s})\rightarrow\cdots.
\end{equation*}
The reduced Floer homology, denoted  $HF^+_{red}(Y,\mathfrak{s})$, can be defined either as the cokernel of $\pi$ or the kernel of $\iota$ with grading shifted up by one.  

In the case that the spin$^c$-structure $\mathfrak{s}$ has torsion first Chern class, the relative grading of the corresponding Floer homology modules can be lifted to an \emph{absolute} $\Q$-grading. In particular,  
$HF^\circ(Y,\mathfrak{s})$ is an absolutely $\Q$-graded $\mathbb{F}[U]$-module for any $\circ\in\{-,+,\infty\}$. 

For a rational homology 3-sphere $Y$, every spin$^c$ structure will have torsion Chern class, and we define the \emph{correction term} $d(Y,\mathfrak{s})\in \Q$ to be the minimal $\Q$-grading of any element in $HF^+(Y,\mathfrak{s})$ in the image of $\pi$. A structure theorem \cite[Theorem 10.1]{Ozsvath-Szabo:2004-2} for the Floer modules  states that $HF^\infty(Y,\mathfrak{s})\cong\mathbb{F}[U,U^{-1}]$, from which it follows that  \[HF^+(Y,\mathfrak{s})\cong\mathcal{T}^+_{d(Y,\mathfrak{s})}\oplus HF^+_{red}(Y,\mathfrak{s}),\] 
where $\mathcal{T}^+_d$ denotes the $\Q$-graded $\mathbb{F}[U]$-module isomorphic to $\mathbb{F}[U,U^{-1}]/U\mathbb{F}[U]$ in which the non-trivial element with lowest grading occurs in grading $d\in \Q$. Multiplication by $U$ decreases the $\Q$-grading by $2$.

A 3-manifold $Y$ with $H_1(Y;\Z)\cong\Z$ has  a unique spin$^c$ structure  with torsion (zero) Chern class;  we denote this spin$^c$ structure by $\mathfrak{s}_0$. In this setting, the structure theorem states that $HF^\infty(Y,\mathfrak{s})\cong\mathbb{F}[U,U^{-1}]\oplus\mathbb{F}[U,U^{-1}] $, with the two summands supported in grading $\pm \frac{1}{2}$ modulo 2, respectively.  We  define $d_{1/2}(Y)$ and  $d_{-1/2}(Y)$ to be the minimal grading of any  element in the image of $\pi$ in $HF^+(Y,\mathfrak{s}_0)$ supported in the grading $\frac{1}{2}$ and $-\frac{1}{2}$ modulo $2$, respectively. It follows that 
\begin{equation*}
HF^+(Y,\mathfrak{s}_0)\cong\mathcal{T}^+_{d_{-1/2}(Y)}\oplus\mathcal{T}^+_{d_{1/2}(Y)}\oplus HF^+_{red}(Y,\mathfrak{s}_0).
\end{equation*}

The key features of the correction terms are  certain constraints they place on  negative semi-definite 4-manifolds bounded by a given 3-manifold, \cite[Theorem~9.11]{Ozsvath-Szabo:2003-2}.  Applying these constraints to the 4-manifold obtained from a homology cobordism by drilling out a neighborhood of an arc connecting the boundaries yields the following (compare \cite[Proposition~4.5]{Levine-Ruberman:2014-1}):

\begin{Homology Cobordism Invariance} If $Y$ and $Y'$ are integral homology cobordant homology manifolds with first homology $\Z$, then $d_{\pm 1/2}(Y)=d_{\pm 1/2}(Y')$.
\end{Homology Cobordism Invariance}
The relevance to the surgery question at hand also becomes apparent: if a 3-manifold $Y$ is obtained by 0-surgery on a knot $K$ in $S^3$,  then $Y$ bounds a homology $S^2\times D^2$, gotten by attaching a $0$-framed 2-handle to the 4-ball along $K$, and so does $-Y$ after reversing the orientation of the 4-manifold.  Coupling this observation with the constraints mentioned above, and using the fact that $d_{-1/2}(Y)=-d_{1/2}(-Y)$ \cite[Proposition~4.10]{Ozsvath-Szabo:2003-2}, we get the following obstruction:

\begin{zerosurgeryobstruction}[{\cite[Corollary 9.13]{Ozsvath-Szabo:2003-2}}] If $Y$ bounds a homology $S^2\times D^2$  then $d_{1/2}(Y)\leq\frac{1}{2}$ and $d_{-1/2}(Y)\geq -\frac{1}{2}$.  
\end{zerosurgeryobstruction}
\noindent The obstruction applies, for instance, if $Y$ is homology cobordant to zero surgery on a knot in a 3-manifold that bounds a smooth contractible 4-manifold.

Drawing on information from the surgery exact triangle, Ozsv\'{a}th and Szab\'{o} \cite[Proposition~4.12]{Ozsvath-Szabo:2003-2} gave a refined statement of the obstruction, which determines the values of the correction terms.  We rephrase their result in terms of  the non-negative knot invariant  $V_0(K)$  introduced by Rasmussen  (under the name $h_0(K)$) in \cite{Rasmussen:2003-1}, and used by Ni-Wu \cite{Ni-Wu:2015-1}.  To see that the following agrees with the stated reference, we recall that $d(S^3_1(K))=-2V_0(K)$, and that the $d$-invariant of $1$-surgery changes sign under orientation reversal (implying $d(S^3_{-1}(K))=2V_0(\overline{K})$).

\begin{theorem}[{\cite[Proposition 4.12]{Ozsvath-Szabo:2003-2}}]\label{thm:OS}Suppose that $Y$ is obtained by $0$-surgery on a knot $K$ in~$S^3$. Then $d_{1/2}(Y)=\frac{1}{2}-2V_0(K)$ and $d_{-1/2}(Y)=-\frac{1}{2}+2V_0(\overline{K})$ where $\overline{K}$ is the mirror of $K$.
\end{theorem}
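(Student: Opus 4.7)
The theorem is essentially a reformulation of \cite[Proposition 4.12]{Ozsvath-Szabo:2003-2} in the language of the invariant $V_0$, so my plan is to derive the two identities by combining the original proposition with known conversion formulas between $V_0$ and correction terms of $\pm 1$-surgery.

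The first step is to set up the relevant instance of the Heegaard Floer surgery exact triangle. Writing $Y = S^3_0(K)$ and fixing the unique torsion spin$^c$ structure $\mathfrak{s}_0$ on $Y$, there is a long exact sequence
\begin{equation*}
\cdots \longrightarrow HF^+(S^3) \longrightarrow HF^+(Y,\mathfrak{s}_0) \longrightarrow HF^+(S^3_{-1}(K)) \longrightarrow \cdots
\end{equation*}
whose maps are induced by the two-handle cobordisms realizing the successive surgery moves. A parallel triangle exists with $S^3_{+1}(K)$ in place of $S^3_{-1}(K)$. The second step is to make the absolute grading shifts of these cobordism maps explicit using the formula of Ozsv\'ath--Szab\'o: the shift contributed by each two-handle cobordism is controlled by the Chern class and self-intersection data of the corresponding spin$^c$ cobordism, and a direct computation shows that the map to $HF^+(S^3_{-1}(K))$ is graded by $-1/2$ on the tower of $HF^+(Y,\mathfrak{s}_0)$ sitting in grading $-1/2 \pmod 2$, and similarly for the $+1$-surgery triangle and the other tower.

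The third step uses the structure theorem. Since $HF^\infty(Y,\mathfrak{s}_0) \cong \mathbb{F}[U,U^{-1}] \oplus \mathbb{F}[U,U^{-1}]$ splits into two towers in gradings $\pm 1/2 \pmod 2$, and $HF^+(S^3_{\pm 1}(K))$ contains a single $\mathcal{T}^+$-tower based in grading $d(S^3_{\pm 1}(K))$, chasing the exact triangle as in the proof of \cite[Proposition 4.12]{Ozsvath-Szabo:2003-2} identifies the base of each tower of $HF^+(Y,\mathfrak{s}_0)$ with the base of the tower of the appropriate surgery on $K$, giving
\begin{equation*}
d_{1/2}(Y) = \tfrac{1}{2} + d(S^3_{+1}(K)), \qquad d_{-1/2}(Y) = -\tfrac{1}{2} + d(S^3_{-1}(K))\cdot(-1)\cdot(-1).
\end{equation*}
(The sign conventions are precisely those of \cite{Ozsvath-Szabo:2003-2}, and the $\pm 1/2$ offsets record the grading shifts of the cobordism maps.) The final step is the substitution $d(S^3_{+1}(K)) = -2V_0(K)$, which is Rasmussen's identification of $h_0$ with the correction term, together with $d(S^3_{-1}(K)) = -d(S^3_{+1}(\overline{K})) = 2V_0(\overline{K})$ from the orientation-reversal formula $d(-Y,\mathfrak{s}) = -d(Y,\mathfrak{s})$. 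Plugging these in yields the stated expressions.

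The only genuinely delicate point is the second step: verifying that the grading shifts in the exact triangle are exactly $\pm 1/2$ and that the tower of $HF^+(Y,\mathfrak{s}_0)$ in grading $1/2 \pmod 2$ is the one connected to the $+1$-surgery triangle (rather than the $-1$-surgery one). This is a bookkeeping exercise in absolute grading conventions and spin$^c$-structure identifications, and it is the heart of \cite[Proposition 4.12]{Ozsvath-Szabo:2003-2}; for the purposes of this paper I would simply quote it and focus the exposition on the reformulation via $V_0$.
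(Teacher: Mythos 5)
Your proposal is correct and follows essentially the same route as the paper: the paper likewise treats this as a direct quotation of \cite[Proposition~4.12]{Ozsvath-Szabo:2003-2} (which gives $d_{1/2}(Y)-\tfrac{1}{2}=d(S^3_{1}(K))$ and $d_{-1/2}(Y)+\tfrac{1}{2}=d(S^3_{-1}(K))$), combined with the substitutions $d(S^3_{1}(K))=-2V_0(K)$ and $d(S^3_{-1}(K))=-d(S^3_{1}(\overline{K}))=2V_0(\overline{K})$. The only blemish is the stray factor ``$\cdot(-1)\cdot(-1)$'' in your displayed formula, which is harmless but should be deleted.
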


\section{Computation of $d_{\pm 1/2}(M_k)$}\label{section:dofM_k}
Consider the 3-manifold $M_k$ obtained by $(1,1)$ surgery on the link obtained from the Hopf link by connected summing one component with the right-handed trefoil $T_{2,3}$ and the other component with the $(2,4k-1)$ torus knot $T_{2,4k-1}$ as depicted in Figure~\ref{figure:Hedden-Mark}. In this section, we compute $d_{\pm 1/2}(M_k)$ for any $k\geq 1$.  We assume that the reader is familiar with knot Floer homology \cite{Ozsvath-Szabo:2004-1,Rasmussen:2003-1}.

\begin{theorem}\label{thm:dofM_k}For any $k\geq 1$, $d_{1/2}(M_k)=-2k+\frac{1}{2}$ and $d_{-1/2}(M_k)=-\frac{5}{2}$.
\end{theorem}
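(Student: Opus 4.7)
The plan is to compute $d_{\pm 1/2}(M_k)$ via Ozsv\'ath--Szab\'o's integer surgery (mapping cone) formula, after re-expressing $M_k$ as $0$-surgery on a null-homologous knot in an integer homology sphere. Perform the $(1,1)$-surgery on the link $L$ of Figure~\ref{figure:Hedden-Mark} in two stages: the $+1$-surgery on $C_2 \# T_{2,4k-1}$ (whose underlying knot type in $S^3$ is $T_{2,4k-1}$) produces the Brieskorn homology sphere $Y := S^3_{+1}(T_{2,4k-1})$. A short homology calculation in $Y \setminus (C_1 \# T_{2,3})$, using that the two components link once in $L$, shows that the Seifert longitude of $C_1 \# T_{2,3}$ picks up one meridian in passing from $S^3$ to $Y$, so the $+1$-framing in $S^3$ becomes the $0$-framing in $Y$. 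Moreover $C_1$, being a meridian of $T_{2,4k-1}$, is isotopic in $Y$ to the dual knot $T_{2,4k-1}^{*}$ (the core of the surgery solid torus). Therefore
\[
M_k \;\cong\; Y_{0}(K), \qquad K := T_{2,4k-1}^{*} \# T_{2,3}.
\]

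Next I would write down the knot Floer complex of $K \subset Y$. By the K\"unneth formula for knot Floer homology of connect sums,
\[
CFK^\infty(Y, K) \;\cong\; CFK^\infty(Y, T_{2,4k-1}^{*}) \otimes_{\mathbb{F}[U]} CFK^\infty(S^3, T_{2,3}).
\]
The trefoil factor is the standard three-step staircase. The dual-knot factor is known: since $T_{2,4k-1}$ is an L-space knot, the knot Floer complex of its dual in $+1$-surgery admits an explicit description built from the Alexander polynomial $\Delta_{T_{2,4k-1}}(t)=\sum_{i=0}^{4k-2}(-t)^{i}$ (following Ozsv\'ath--Szab\'o and Hedden on duals of L-space knots). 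Absolute Maslov gradings on the tensor product are pinned down via $d(Y) = -2V_0(T_{2,4k-1}) = -2k$, where $V_0(T_{2,4k-1}) = k$.

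Finally, I would feed $CFK^\infty(Y,K)$ into Ozsv\'ath--Szab\'o's mapping cone formula for $0$-surgery on a null-homologous knot in a rational homology sphere. This presents $HF^{+}(M_k, \mathfrak{s}_0)$ as the homology of an explicit mapping cone $\mathbb{A}_{\bullet} \to \mathbb{B}_{\bullet}$ built from horizontal and vertical truncations of $CFK^\infty(Y,K)$. The two $\mathbb{F}[U, U^{-1}]$-summands of $HF^\infty(M_k, \mathfrak{s}_0)$ are distinguished by their $\Z/2$-grading, and locating the base elements of the two $\mathcal{T}^{+}$-towers in $HF^{+}(M_k, \mathfrak{s}_0)$ yields $d_{1/2}(M_k) = -2k + \tfrac{1}{2}$, which inherits the $V_0(T_{2,4k-1}) = k$ shift from the $Y$-side of the cone, and $d_{-1/2}(M_k) = -\tfrac{5}{2}$, which reflects the contribution $-2V_0(T_{2,3}) = -2$ of the trefoil factor together with the built-in $-\tfrac{1}{2}$ shift in the $0$-surgery mapping cone.

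The principal obstacle is the grading bookkeeping in this last step: although every ingredient of the mapping cone is explicit, the tensor-product complex $CFK^\infty(Y, T_{2,4k-1}^{*}) \otimes CFK^\infty(S^3, T_{2,3})$ has many generators, and identifying precisely which pair sits at the bases of the two $\mathcal{T}^{+}$-towers in $HF^{+}(M_k, \mathfrak{s}_0)$ requires careful tracking of Maslov gradings through the filtration truncations and the cone differential. Once the framing reduction and the connect-sum identification of $K$ are in place, however, the rest of the computation is essentially routine combinatorics in the mapping cone.
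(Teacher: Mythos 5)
Your overall architecture mirrors the paper's: both proofs realize $M_k$ by doing the two $(+1)$-surgeries on the link of Figure~\ref{figure:Hedden-Mark} in stages, identify the remaining knot as a connected sum of a surgery-dual knot with a torus knot, apply the K\"unneth formula for $CFK^\infty$ of connected sums, and finish with a $0$-surgery formula for $d_{\pm 1/2}$. The framing computation (linking number one forces the $+1$-framing in $S^3$ to become the Seifert framing in the surgered manifold) and the value $d(S^3_{+1}(T_{2,4k-1}))=-2V_0(T_{2,4k-1})=-2k$ are both correct. The difference is that you blow down the $T_{2,4k-1}$ component first, whereas the paper blows down the trefoil component first, and this choice is where your argument has a genuine gap.

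The gap is the claimed ``explicit description'' of $CFK^\infty(Y,T^{*}_{2,4k-1})$ for the dual knot in $Y=S^3_{+1}(T_{2,4k-1})$. The results you invoke on duals of L-space knots (Ozsv\'ath--Szab\'o, Hedden, and the formulation in \cite[Theorem~4.2]{Hedden-Kim-Livingston:2016-1}) determine the dual-knot complex only for surgery coefficients of absolute value at least $2g(K)$ (or in the large/L-space surgery regime); here the coefficient is $+1$ while $2g(T_{2,4k-1})=4k-2$, so for every $k\geq 1$ the hypothesis fails, and for $k\geq 2$ the ambient manifold $Y=\Sigma(2,4k-1,8k-3)$ is not even an L-space, so no Alexander-polynomial staircase description of the dual knot is available. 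The paper faces the mirror image of this problem (its remark following Proposition~\ref{proposition:CFKofJ_k} notes explicitly that $1<2g(T_{2,3})$ puts the meridian outside the range of the general theorem) but escapes it precisely because of the opposite choice of which component to blow down first: the dual knot there is the meridian $\mu$ of $T_{2,3}$, a \emph{genus-one fibered} knot in the L-space homology sphere $S^3_1(T_{2,3})$, so adjunction, fiberedness, the symmetry of $\widehat{HFK}$, and the fact that $-1$-surgery on $\mu$ returns $S^3$ pin down $CFK^\infty(S^3_1(T_{2,3}),\mu)\cong CFK^\infty(T_{2,-3})[-2]$ by an ad hoc case analysis. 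No analogous low-genus argument is available for your $T^{*}_{2,4k-1}\subset Y$, whose genus grows with $k$. A secondary issue: because your $Y$ has nontrivial $HF^+_{red}$ for $k\geq 2$, extracting $d_{\pm 1/2}$ of the $0$-surgery from the mapping cone is no longer the clean statement of Theorem~\ref{thm:OS} (which the paper extends only to knots in integer homology sphere L-spaces, where the correction terms simply shift by $d$ of the ambient manifold); the tower bookkeeping you defer as ``routine combinatorics'' is exactly where the reduced Floer homology of $Y$ could interfere. The fix is simply to reverse the order of the blow-downs, as the paper does, and then use the stable equivalence $CFK^\infty(T_{2,4k-1})\otimes CFK^\infty(T_{2,-3})\sim CFK^\infty(T_{2,4k-3})$ to reduce to $0$-surgery on a torus knot in an L-space homology sphere.
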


We briefly discuss the strategy of our computation. Consider the knot $J_k$ in $S^3_{1}(T_{2,3})$ depicted in Figure~\ref{figure:J_k}. Since $H_1(M_k)\cong \Z$, $M_k$ is the result of surgery on $S^3_1(T_{2,3})$ along the knot $J_k$ using its Seifert framing.  Note that the Seifert framing of $J_k$ is the 1-framing with respect to the blackboard framing of Figure~\ref{figure:J_k}. Then $d_{\pm 1/2}(M_k)$ can be determined by the knot Floer homology $CFK^\infty(S^3_1(T_{2,3}),J_k)$ using a surgery formula \cite[Section~4.8]{Ozsvath-Szabo:2008-1}.

\begin{figure}[tb!]
	\centering
	\includegraphics[scale=1]{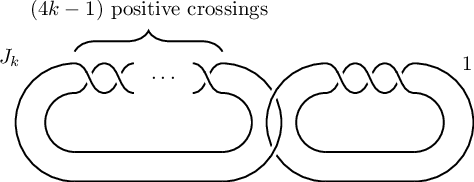}
	\caption{A knot $J_{k}$ in $S^3_1(T_{2,3})$.}
	\label{figure:J_k}
\end{figure}

In order to determine the aforementioned knot Floer homology complex, we first consider the  meridian of $T_{2,3}$, viewed as a knot $\mu\subset S^3_1(T_{2,3})$. Then   the relevant knot $(S^3_1(T_{2,3}),J_k)$ is simply the connected sum of two knots, $(S^3_1(T_{2,3}),\mu)$ and $(S^3,T_{2,4k-1})$. A K\"{u}nneth formula for the knot Floer homology of connected sums then implies 
\begin{equation}\label{equation:Kunneth}CFK^\infty(S^3_1(T_{2,3}),J_k)\cong CFK^\infty (S^3_1(T_{2,3}),\mu)\otimes CFK^\infty (T_{2,4k-1}).\end{equation}

We can deduce the structure of  $CFK^\infty(S^3_1(T_{2,3}),\mu)$ using a surgery formula which,  together with the K\"{u}nneth formula and the well-known structure of the Floer homology of torus knots, will determine the filtered chain homotopy type of $CFK^\infty(S^3_1(T_{2,3}),J_k)$.   Precisely, we prove the following:

\begin{proposition}\label{proposition:CFKofJ_k}We have the following filtered chain homotopy equivalences.
	\begin{enumerate}
		\item\label{item:CFKofmeridian} $CFK^\infty(S^3_1(T_{2,3}),\mu)\cong CFK^\infty (T_{2,-3})[-2]$.
		\item\label{item:CFKofJ_k} $CFK^\infty(S^3_{1}(T_{2,3}),J_k)\oplus A_0\cong CFK^\infty(T_{2,4k-3})[-2]\oplus A_1$.
	\end{enumerate}
	Here $[-2]$ means that the Maslov grading is shifted by $-2$, and $A_0$ and $A_1$ are  acyclic chain complexes over $\mathbb{F}[U,U^{-1}]$.
\end{proposition}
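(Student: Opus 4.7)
My plan is to establish (1) by identifying $\mu$ as a dual knot and applying an appropriate version of the Ozsv\'ath--Szab\'o surgery formula, and to deduce (2) from (1) via the K\"unneth isomorphism \eqref{equation:Kunneth} together with an explicit algebraic simplification of the resulting tensor product of torus-knot staircases.

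For (1), I would first verify the geometric fact that in $S^3_1(T_{2,3})$ the meridian $\mu$ is isotopic to the core of the surgery solid torus. The surgery slope is $\mu+\lambda$, and since $\mu\cdot(\mu+\lambda)=1$ on the boundary torus, $\mu$ is a longitude of the new solid torus and therefore isotopic to its core. Thus $\mu$ is the \emph{dual knot} to $T_{2,3}$. I would then apply the dual-knot version of the integer surgery formula (Ozsv\'ath--Szab\'o; compare \cite[Section~4.8]{Ozsvath-Szabo:2008-1}), which expresses $CFK^\infty$ of the dual knot as a mapping cone assembled from the horizontal and vertical truncations of $CFK^\infty(T_{2,3})$. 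Because $T_{2,3}$ is an L-space knot with the simplest nontrivial staircase ($3$ generators, $V_0=1$), the mapping cone is small enough to simplify by hand. A direct identification shows the outcome is filtered chain homotopy equivalent to the mirrored three-generator staircase $CFK^\infty(T_{2,-3})$, with an overall Maslov shift of $-2$ matching the correction term $d(S^3_1(T_{2,3}))=-2$.

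For (2), combining \eqref{equation:Kunneth} with (1) and the standard K\"unneth formula for connected sums of knots in $S^3$ yields an isomorphism
\begin{equation*}
CFK^\infty(S^3_1(T_{2,3}), J_k)\ \cong\ CFK^\infty(T_{2,-3}\#T_{2,4k-1})[-2].
\end{equation*}
It therefore suffices to produce acyclic $\mathbb{F}[U,U^{-1}]$-complexes $A_0,A_1$ with
\begin{equation*}
CFK^\infty(T_{2,-3}\#T_{2,4k-1})\oplus A_0\ \cong\ CFK^\infty(T_{2,4k-3})\oplus A_1.
\end{equation*}
Both sides are explicit: the left-hand side is the tensor product of the inverted $3$-generator staircase of $T_{2,-3}$ with the $(4k-1)$-generator staircase of $T_{2,4k-1}$, while the right-hand side is the $(4k-3)$-generator staircase of $T_{2,4k-3}$. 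I would then carry out a filtered cancellation on the tensor product, peeling off pairs of generators joined by ``short'' length-one differentials as acyclic summands. After these cancellations the surviving generators, placed in the $(i,j)$-plane, match the $T_{2,4k-3}$ staircase in both bifiltration and Maslov grading. The consistency check $\tau(T_{2,-3}\#T_{2,4k-1})=-1+(2k-1)=2k-2=\tau(T_{2,4k-3})$ pins down the vertical position of the surviving staircase.

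The main obstacle is the bookkeeping in the tensor-product simplification of (2). Although L-space staircases simplify maximally under such cancellations in principle, the bifiltered Maslov-graded chain homotopy equivalence requires an explicit basis for each staircase and careful tracking of how the $3(4k-1)$ tensor generators pair up under short differentials, with exactly $4k-3$ surviving in the correct bifiltration positions. Part (1), by contrast, is a direct (if intricate) application of a dual-knot surgery formula to the simplest nontrivial L-space knot, and the resulting mapping cone can be analyzed essentially by inspection.
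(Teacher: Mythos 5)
Your part (1) rests on a tool that is not actually available in the reference you point to: a ``dual-knot version of the integer surgery formula'' that outputs the filtered chain homotopy type of $CFK^\infty(S^3_1(T_{2,3}),\mu)$ as a mapping cone built from $CFK^\infty(T_{2,3})$. The mapping cone of \cite[Section~4.8]{Ozsvath-Szabo:2008-1} computes $HF^+$ of the surgered \emph{three-manifold}, not the bifiltered complex of the dual knot, and the only published dual-knot statement of the kind you need, \cite[Theorem~4.2]{Hedden-Kim-Livingston:2016-1}, requires the framing to be at least $2g(K)$ in absolute value, which fails here since $1<2g(T_{2,3})=2$. The paper flags exactly this point in a remark and instead runs an ad hoc argument: $\mu$ is a genus-one fibered knot in the Poincar\'e sphere (an $L$-space homology sphere with $d=-2$), so adjunction, fiberedness, the symmetry of $\widehat{HFK}$, and the existence of a cancelling differential leave only three possible thin complexes (a rank-$5$ one, or the right- or left-handed trefoil complex shifted by $[-2]$); then the \emph{forward} large-surgery formula applied to $-1$-surgery on $\mu$, which returns $S^3$ with $\widehat{HF}$ of rank $1$, eliminates all but $CFK^\infty(T_{2,-3})[-2]$. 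To keep your route you would have to prove the dual-knot mapping-cone statement for $+1$-surgery on a genus-one knot (work in progress of Hedden--Levine at the time of the paper), or else substitute an argument like the one above.

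For part (2) you follow the same outline as the paper: the K\"unneth formula plus part (1) reduces everything to showing that $CFK^\infty(T_{2,-3})\otimes CFK^\infty(T_{2,4k-1})$ is stably filtered chain homotopy equivalent to $CFK^\infty(T_{2,4k-3})$. The paper simply cites this (\cite[Theorem B.1]{Hedden-Kim-Livingston:2016-1} and \cite[Proposition 3.11]{Hom:2017-1}), whereas you propose to verify it by hand, cancelling the $3(4k-1)$ tensor generators down to a $(4k-3)$-step staircase. That is a legitimate alternative in principle, but as written it is a plan rather than a proof: you never exhibit the bases, the short differentials, or the change of basis that splits off the acyclic summands, and the $\tau$ computation is only a consistency check, not a substitute for the bifiltered identification. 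Either carry out that cancellation explicitly or cite the stable-equivalence results as the paper does.
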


\begin{remark}For $N\geq 2g(K)$, it is known that $CFK^\infty(S^3_{-N}(K),\mu)$ is determined by $CFK^\infty(S^3,K)$ in \cite[Theorem~4.2]{Hedden-Kim-Livingston:2016-1} (compare  \cite[Theorem~4.1]{Hedden:2007-1}). Since $1<2g(T_{2,3})$, we cannot apply \cite[Theorem~4.2]{Hedden-Kim-Livingston:2016-1} to determine $CFK^\infty(S^3_1(T_{2,3}),\mu)$.  Work in progress of Hedden and Levine on a general surgery formula for the knot Floer homology of $\mu$ would easily yield the formula.  In the case at hand, however, a surgery formula applied for $\mu$ allows for an ad hoc argument.\end{remark}

\begin{proof} (\ref{item:CFKofmeridian}) The key observation is that the complement of $\mu \subset S^3_1(T_{2,3})$ is homeomorphic to the complement of  $T_{2,3}\subset S^3$.  Indeed,  this can be seen by observing that $\mu$ is isotopic to the core of the surgery solid torus.  It follows that $\mu$ is a genus one fibered knot.  Moreover, $S^3_1(T_{2,3})$ is homeomorphic to the Poincar{\'e} sphere equipped with the opposite orientation it inherits as the boundary of the resolution  of the surface singularity $z^2+w^3+r^5=0$, which is well known and easily seen to be an $L$-space homology sphere with $d$-invariant equal to $-2$.  
	
	As $\mu$ is a genus one fibered knot in an $L$-space homology sphere, it follows readily that its knot Floer homology must have rank $5$ or $3$, and in the latter case must be isomorphic to that of one of the trefoil knots, with an overall shift in the Maslov grading by the $d$-invariant.  To see this, observe that being genus one implies, by the adjunction inequality for knot Floer homology \cite[Theorem~5.1]{Ozsvath-Szabo:2004-1}, that  $\widehat{HFK}(S^3_1(T_{2,3}),\mu,i)=0$ for $|i|>1$.  As $\mu$ is fibered, $\widehat{HFK}(S^3_1(T_{2,3}),\mu,i)=\mathbb{F}$ for $i=\pm1$  \cite[Theorem~5.1]{Ozsvath-Szabo:2005-1}.  Moreover, the Maslov grading of the generator of $\widehat{HFK}(S^3_1(T_{2,3}),\mu,1)$ is two higher than that of $\widehat{HFK}(S^3_1(T_{2,3}),\mu,-1)$, by a symmetry of the knot Floer homology groups \cite[Proposition~3.10]{Ozsvath-Szabo:2004-1}.   Now there is a differential $\d$ acting on $\widehat{HFK}(-S^3_1(T_{2,3}),\mu)$, the homology of which is isomorphic to the Floer homology of the ambient 3-manifold. (The existence of such a ``cancelling differential" follows from the homological method of reduction of a filtered chain complex; see \cite[Section 2.1]{Hedden-Watson:2018-1} for details on this perspective.) This differential strictly lowers the Alexander grading, which implies that the ``middle" group $\widehat{HFK}(S^3_1(T_{2,3}),\mu,0)$ is either $\mathbb{F}^3$ or $\mathbb{F}$. We discuss the cases separately (compare \cite[Proposition 3.1]{Baldwin:2008-1}).

	If $\widehat{HFK}(S^3_1(T_{2,3}),\mu,0)=\mathbb{F}^3$, two of the summands are supported in the same grading, which is one less than that of the top group; moreover, one of these summands is  the image of $\widehat{HFK}(S^3_1(T_{2,3}),\mu,1)$ under~$\d$, and $\d$ maps the other summand  surjectively onto $\widehat{HFK}(S^3_1(T_{2,3}),\mu,-1)$.   This follows immediately from existence of the cancelling differential.  The remaining summand of $\mathbb{F}^3$ lies in Maslov grading $-2$, the $d$-invariant of the underlying manifold.  If this happens to be the grading of the other two summands, then the resulting knot Floer homology is thin, and $CFK^\infty$ is determined by the hat groups.  It follows in this case that $CFK^\infty$ is isomorphic to that of the figure-eight knot, with an overall shift in the Maslov grading down by $2$.

	The case that $\widehat{HFK}(S^3_1(T_{2,3}),\mu,0)=\mathbb{F}$ divides into two sub-cases, depending on whether $\d$  maps the middle group surjectively onto the bottom, or the top group surjectively onto the middle. In both sub-cases the resulting knot Floer homology is thin, and hence $CFK^\infty$ is determined by the hat groups.  In the former sub-case the hat groups are isomorphic to those of the right-handed trefoil,  and to those of the left-handed trefoil in the latter; in both sub-cases, their Maslov grading has an overall shift down by $2$.  
	
	To determine which of the three possibilities above arise, we recall the surgery formula for knot Floer homology.  In its simplest guise, which will be sufficient for our purposes, it expresses the Floer homology of the manifold obtained by $n$-surgery, $n\le -(2g(K)-1)$ on a null-homologous knot $(Y,K)$ as the homology of a particular sub-quotient complex of $CFK^\infty(Y,K)$  \cite[Theorem~4.1]{Ozsvath-Szabo:2004-1}.   Its relevance to us is that $-1$-surgery on $(S^3_1(T_{2,3}),\mu)$ is homeomorphic to $S^3$, a manifold with $\widehat{HF}(S^3)$ of rank $1$.   Since $\mu$ is a genus one knot, we can apply the surgery formula to (re)-calculate the Floer homology of $S^3$, viewed as $-1$-surgery on $\mu$.  The surgery formula says that the homology is given as the homology of the subquotient complex of $CFK^\infty(S^3_1(T_{2,3}),\mu)$ generated by chains whose $\Z\oplus\Z$-filtration values satisfy the constraint min$(i,j)=0$.   Of the three possibilities for $\widehat{HFK}(S^3_1(T_{2,3}),\mu)$, all but the case of the left-handed trefoil (shifted down in grading by $2$) have the property that the relevant subquotient complex has homology of rank $3$. Indeed, $-1$-surgery on the right-handed trefoil or figure-eight knots have Floer homology of rank $3$.  In the case that the middle group of knot Floer homology has rank $3$ but is not supported in a single grading, the fact that $\partial^2=0$ on $CFK^\infty$ implies the two arrows in the subquotient complex must have the same head.  This, in turn,  forces the homology of the subquotient to have rank 3.  The stated structure of $CFK^\infty(S^3_1(T_{2,3}),\mu)$ now follows.
	
	(\ref{item:CFKofJ_k}) 
	By (\ref{item:CFKofmeridian}), the K{\"u}nneth formula (\ref{equation:Kunneth}) becomes 
	\begin{equation}\label{equation:J_k}CFK^\infty(S^3_1(T_{2,3}),J_k)\cong CFK^\infty (T_{2,-3})[-2]\otimes CFK^\infty (T_{2,4k-1}).\end{equation}
	For brevity, we say two chain complexes $C_0$ and $C_1$ are \emph{stably filtered chain homotopy equivalent} (denoted by $C_0\sim C_1$) if $C_0\oplus A_0$ is filtered chain homotopy equivalent to $C_1\oplus A_1$ for some acyclic chain complexes $A_0$ and $A_1$.  By \cite[Theorem B.1]{Hedden-Kim-Livingston:2016-1}, 
	\[CFK^\infty(T_{2,4k-1})\sim CFK^\infty(T_{2,3})\otimes CFK^\infty(T_{2,4k-3}).\]
	By \cite[Proposition 3.11]{Hom:2017-1},
	\[CFK^\infty(T_{2,-3})\otimes CFK^\infty(T_{2,3})\cong CFK^\infty (T_{2,-3}\# T_{2,3})\sim CFK^\infty(U)\]
	since $T_{2,-3}\# T_{2,3}$ is slice. Hence $CFK^\infty(T_{2,-3})\otimes CFK^\infty(T_{2,4k-1})\sim CFK^\infty(T_{2,4k-3})$.
	It follows that the right hand side of \eqref{equation:J_k} is stably filtered chain homotopy equivalent to $CFK^\infty(T_{2,4k-3})[-2]$, and we obtain the desired conclusion.
\end{proof}

Now we prove Theorem \ref{thm:dofM_k} which states that $d_{1/2}(M_k)=-2k+\frac{1}{2}$ and $d_{-1/2}(M_k)=-\frac{5}{2}$ if $k\geq 1$.
\begin{proof}[Proof of Theorem \ref{thm:dofM_k}] Recall that $M_k$ is obtained from $S^3_1(T_{2,3})$ by surgery on $J_k$ along its Seifert framing. In Section 6.2 of \cite{Hedden-Kim-Livingston:2016-1} it is shown that the $d$-invariants of large surgery on knots with stably filtered homotopy equivalent complexes agree. Indeed,  \cite[Proposition 6.5]{Hedden-Kim-Livingston:2016-1} shows that direct summing an acyclic complex to a given one has no effect on the $d$-invariants  one  derives from it.  The $d$-invariants of large surgery on a knot are equivalent to the $V_i$ invariants, hence we obtain
	\begin{align*}
	&d_{1/2}(M_k)=d_{1/2}(S^3_0(T_{2,4k-3}))-2=-\tfrac{3}{2}-2V_0(T_{2,4k-3}),\\
	&d_{-1/2}(M_k)=d_{-1/2}(S^3_0(T_{2,4k-3}))-2=-\tfrac{5}{2}+2V_0(T_{2,-4k+3}).
	\end{align*}
	by Proposition \ref{proposition:CFKofJ_k}(\ref{item:CFKofJ_k}) and Theorem \ref{thm:OS}. Strictly speaking, Theorem~\ref{thm:OS} pertains only to surgery on knots in $S^3$, but the proof easily yields a corresponding formula for surgery on knots in an integral homology sphere $L$-space; in these cases, the correction terms inherit an overall shift by the $d$-invariant of the ambient manifold (here, $-2$).   Since $k\geq 1$,  
	\begin{align*}&V_0(T_{2,4k-3})=k-1,\\
	&V_0(T_{2,-4k+3})=0\end{align*} (for example, see \cite[Theorem~1.6]{Borodzik-Nemethi:2013-1}). This completes the proof.
\end{proof}

\section{Correction terms of Seifert manifolds}\label{section:Seifert}

In this section we provide some general constraints on the  correction terms of  a Seifert fibered homology $S^1\times S^2$.    More precisely, we show  $d_{-1/2}(M)\geq -\frac{1}{2}$ and $d_{1/2}(M)\leq\frac{1}{2}$ for any  Seifert fibered homology $S^1\times S^2$.  It follows at once that none of our manifolds are homology cobordant to a Seifert fibered space.  We also see that the zero surgery obstruction can say nothing about Seifert manifolds.

\vskip0.1in

Our estimates hinge on the following proposition, which was pointed out to us by Marco Golla. (Compare \cite[Theorem~5.2]{Neumann-Raymond:1978-1}.)

\begin{proposition}\label{proposition:Seifertboundsboth}
	Suppose $M$ is a Seifert fibered homology $S^1\times S^2$. Then both $M$ and $-M$ bound negative semi-definite, plumbed $4$-manifolds.  
\end{proposition}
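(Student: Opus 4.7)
The plan is to realize $M$ as the boundary of a plumbed $4$-manifold determined by its Seifert invariants, and show that the condition $b_1(M)=1$ forces this plumbing to be negative semi-definite.

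First, I recall that any orientable Seifert fibered $3$-manifold over $S^2$ with Seifert invariants $\{e_0;(a_1,b_1),\dots,(a_n,b_n)\}$ is the boundary of a plumbed $4$-manifold $X$ whose weighted graph is star-shaped: a central vertex of Euler number $e_0$ with $n$ legs, each a linear chain with framings coming from the negative continued fraction expansion of $a_i/b_i$, where (after normalizing $0<b_i<a_i$) each entry is at least $2$. Since $M$ is a Seifert fibered homology $S^1\times S^2$, the standard computation of $H_1$ of a Seifert fibered space forces the base to be $S^2$ (otherwise $b_1(M)\geq 2$) and the rational Euler number $e=e_0-\sum b_i/a_i$ to vanish (otherwise $H_1(M;\Q)$ is finite).

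Second, I would analyze the intersection form of this plumbing. Its determinant equals, up to the nonzero positive factor $a_1\cdots a_n$, the rational Euler number $e$, so $e=0$ makes the form degenerate with a one-dimensional kernel. A Schur complement eliminating the central vertex, combined with the fact that each tridiagonal block coming from a leg with entries $\geq 2$ is itself negative definite, then shows that the remaining eigenvalues of the intersection form are strictly negative. Hence the intersection form of $X$ is negative semi-definite, of nullity one.

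Third, for $-M$: orientation reversal gives $e(-M)=-e(M)=0$, and after renormalizing the Seifert invariants of $-M$ so that again $0<b_i'<a_i$, the resulting data yields a star-shaped plumbing with leg framings $\geq 2$ and rational Euler number zero. The same argument applies verbatim, so $-M$ also bounds a negative semi-definite plumbed $4$-manifold.

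The main obstacle is handling the sign conventions consistently so that both $M$ and $-M$ really admit plumbing presentations of the indicated form with all leg entries at least $2$ and central framing compatible with the sign of the desired definiteness. This is essentially a careful verification within Neumann's plumbing calculus, and can be streamlined by invoking the detailed statement in Neumann-Raymond \cite{Neumann-Raymond:1978-1}, to which the proposition compares.
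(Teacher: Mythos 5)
Your proof follows essentially the same route as the paper's: normalize the Seifert invariants so that the star-shaped (normal form) plumbing has all leg framings $\leq -2$, then combine the negative definiteness of the legs with the vanishing of the determinant (equivalently, of the rational Euler number, which is forced by $H_1(M)\cong\Z$) to conclude negative semi-definiteness, and apply the identical argument to $-M$. The only difference is that you make explicit a couple of points the paper leaves implicit (that the base orbifold must be $S^2$, and the linear-algebra step behind ``easy to check''), so this is the same argument.
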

\begin{proof}
	Choose an orientation and a Seifert fibered structure of $M$. As an oriented manifold, $M$ is homeomorphic to $M(e;r_1,\ldots,r_n)$ in Figure~\ref{figure:Seifertmanifold} where $e$ is an integer, and each $r_i$ is a non-zero rational number. We change the Seifert invariant $(e;r_1,\ldots,r_n)$ via the following two steps.
	
	\begin{figure}[b]
		\centering
		\includegraphics[scale=1]{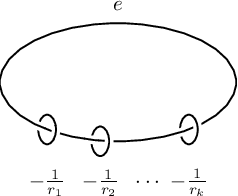}
		\caption{A Seifert fibered 3-manifold $M(e;r_1,\ldots,r_k)$.}
		\label{figure:Seifertmanifold}
	\end{figure}
	
	\begin{enumerate}
		\item If $r_i$ is an integer, remove $r_i$ from the tuple $(e;r_1,\ldots,r_n)$ and add $r_i$ to $e$.
		\item For each $i$, replace $r_i$ and $e$ by $r_i-\lfloor r_i\rfloor$ and $e+\lfloor r_i\rfloor$, respectively.
	\end{enumerate}
	Note that the above procedures are realized by slam-dunk moves, so the homeomorphism type remains unchanged. For brevity, we still denote the resulting Seifert invariant of $M$ by $(e;r_1,\ldots,r_n)$, so that each rational number $r_i$ satisfies $0<r_i<1$. Since $0<r_i<1$, we can write $-\frac{1}{r_i}$ as a negative continued fraction $[a_{i1},\ldots,a_{ik_i}]$ where $a_{ij}\leq -2$ for all $i$ and $j$. Then $M$ bounds a star-shaped plumbed 4-manifold $X_{\Gamma}$ whose corresponding plumbing graph is $\Gamma$ depicted in Figure~\ref{figure:plumbingofSeifert}.
	
	\begin{figure}[h]
		\centering
		\includegraphics[scale=0.75]{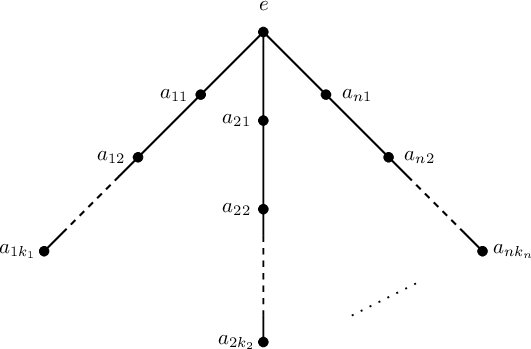}
		\caption{A plumbing graph $\Gamma$.}
		\label{figure:plumbingofSeifert}
	\end{figure}
	
	Since $a_{ij}\leq -2$ for all $i$ and $j$, it is easy to check that $Q_{X_\Gamma}$ is negative semi-definite. (Since $\d X_\Gamma=M$ is a homology $S^1\times S^2$ and $\Gamma$ is a tree, $Q_{X_\Gamma}$ has determinant $0$.)
\end{proof}
\begin{remark}
	The plumbed 4-manifold $X_\Gamma$ constructed in the proof of Proposition~\ref{proposition:Seifertboundsboth} is called the \emph{normal form} of $M$. (Note that $X_\Gamma$ depends only on the choice of orientation of $M$.) What we have shown in Proposition~\ref{proposition:Seifertboundsboth} is that the normal forms of $M$ and $-M$ are negative semi-definite plumbings. (Compare \cite[Theorem~5.2]{Neumann-Raymond:1978-1} where it is shown that one  normal form of a Seifert fibered rational homology sphere is a negative-definite plumbing.)
\end{remark}
We recall a special case of \cite[Corollary~4.8]{Levine-Ruberman:2014-1}. (Note that if $M$ is a closed, oriented 3-manifold with $H_1(M)\cong \Z$, then $M$ has standard $HF^\infty$, and $d_{-1/2}(M)$ is equal to $d(M,\mathfrak{s}_0,H_1(M))$ with the notation of \cite[Corollary~4.8]{Levine-Ruberman:2014-1}.) We remark that this special case essentially follows from \cite[Theorem~9.11]{Ozsvath-Szabo:2003-2} and Elkies' theorem \cite{Elkies:1995-1}.

\begin{proposition}[{\cite[Corollary~4.8]{Levine-Ruberman:2014-1}}]\label{proposition:Levine-Ruberman} Let $M$ be a closed, oriented $3$-manifold with first homology $\Z$. Suppose that $M$ bounds a negative semi-definite, simply connected $4$-manifold $X$. Then $d_{-1/2}(M)\geq -\frac{1}{2}$.
\end{proposition}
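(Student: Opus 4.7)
The plan is to combine the Ozsv\'ath--Szab\'o correction term inequality for negative semi-definite fillings with Elkies' theorem on characteristic covectors in unimodular lattices.

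Since $H_1(M;\Z) = \Z$, the manifold $M$ has standard $HF^\infty$ in the unique spin$^c$ structure $\mathfrak{s}_0$ with torsion first Chern class. Specializing \cite[Theorem~9.11]{Ozsvath-Szabo:2003-2} to our setting (with $X$ negative semi-definite, $b_1(X)=0$, and $b_1(M)=1$) yields the inequality
\begin{equation*}
c_1(\mathfrak{t})^2 + b_2^-(X) \leq 4\, d_{-1/2}(M) + 2
\end{equation*}
for every spin$^c$ structure $\mathfrak{t}$ on $X$ that restricts to $\mathfrak{s}_0$ on $M$. It is therefore enough to exhibit one such $\mathfrak{t}$ with $c_1(\mathfrak{t})^2 \geq -b_2^-(X)$.

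Set $\Lambda := H_2(X;\Z)$, which is free abelian because $X$ is simply connected, and let $Q$ denote the intersection form. The cohomology long exact sequence of the pair $(X,M)$, together with Poincar\'e--Lefschetz duality and the identifications $H^1(M;\Z) \cong \Z \cong H^2(M;\Z)$, collapses to $0 \to \Z \to \Lambda \xrightarrow{Q} \Lambda^* \to \Z \to 0$. Exactness shows that the radical $\mathrm{rad}(Q)$ is a saturated rank-one summand of $\Lambda$; choosing a splitting $\Lambda = \mathrm{rad}(Q) \oplus \bar\Lambda$ and tracking the cokernel $\mathrm{coker}(Q) \cong \mathrm{rad}(Q)^* \oplus \mathrm{coker}(\bar Q) \cong \Z$ then forces the induced form $\bar Q$ on $\bar\Lambda$ to be a negative-definite \emph{unimodular} lattice of rank $b_2^-(X)$. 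Moreover, the displayed exact sequence identifies the classes $K \in \Lambda^*$ with $K|_M = 0$ precisely as $\mathrm{Im}(Q)$, i.e., as those $K$ that vanish on $\mathrm{rad}(Q)$; hence the spin$^c$ structures on $X$ restricting to $\mathfrak{s}_0$ correspond to characteristic covectors on $(\bar\Lambda, \bar Q)$.

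By Elkies' theorem \cite{Elkies:1995-1}, the negative-definite unimodular lattice $\bar\Lambda$ carries a characteristic covector $\bar K$ with $\bar K^2 \geq -b_2^-(X)$. Extending $\bar K$ by zero across $\mathrm{rad}(Q)$ produces a characteristic $K \in \Lambda^*$ with $K|_M = 0$ and $K^2 = \bar K^2 \geq -b_2^-(X)$, hence a spin$^c$ structure $\mathfrak{t}$ on $X$ extending $\mathfrak{s}_0$; substituting into the Ozsv\'ath--Szab\'o inequality above yields $d_{-1/2}(M) \geq -\tfrac{1}{2}$. The main obstacle is the linear-algebraic middle step: verifying that the quotient form $\bar Q$ really is unimodular so that Elkies' theorem applies, and tracking the correspondence between spin$^c$ structures extending $\mathfrak{s}_0$ and characteristic covectors of $\bar Q$ through the splitting of $\Lambda$.
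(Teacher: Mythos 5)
Your argument is correct, and it is precisely the route the paper itself indicates: the paper does not prove this proposition but cites it to Levine--Ruberman and remarks that it ``essentially follows from \cite[Theorem~9.11]{Ozsvath-Szabo:2003-2} and Elkies' theorem,'' which is exactly the combination you carry out. The lattice-theoretic middle step (extracting the rank-one radical from the exact sequence $0\to\Z\to\Lambda\to\Lambda^*\to\Z\to 0$, deducing unimodularity of the quotient form, and matching characteristic covectors of $\bar Q$ with spin$^c$ structures restricting to $\mathfrak{s}_0$) is handled correctly, so your writeup is a valid filling-in of the sketch the paper leaves to the reference.
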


\begin{theorem}\label{theorem:dofSeifert}Suppose $M$ is homology cobordant to a Seifert fibered homology $S^1\times S^2$. Then $d_{-1/2}(M)\geq -\frac{1}{2}$ and $d_{1/2}(M)\leq \frac{1}{2}$.
\end{theorem}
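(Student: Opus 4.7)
The plan is to assemble the theorem from three pieces already in hand: the homology cobordism invariance of $d_{\pm 1/2}$ recalled in Section \ref{section:background}, Proposition \ref{proposition:Seifertboundsboth} (which furnishes negative semi-definite plumbed fillings of both $M$ and $-M$), and Proposition \ref{proposition:Levine-Ruberman} (which converts such a filling into a lower bound on $d_{-1/2}$).

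First, since $d_{-1/2}$ and $d_{1/2}$ are invariants of integral homology cobordism for $3$-manifolds with $H_1 \cong \Z$, I may replace $M$ by a Seifert fibered homology $S^1 \times S^2$ $M'$ and work with $M'$ directly. Next, Proposition \ref{proposition:Seifertboundsboth} produces negative semi-definite, star-shaped plumbed $4$-manifolds $X$ and $X'$ with $\partial X = M'$ and $\partial X' = -M'$. The underlying plumbing graphs are trees, so a routine van Kampen argument (building the plumbing by iterated disk-bundle gluings along simply connected $S^2$'s) shows that both $X$ and $X'$ are simply connected. Hence the hypotheses of Proposition \ref{proposition:Levine-Ruberman} are met by each.

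Applying Proposition \ref{proposition:Levine-Ruberman} to $X$ yields $d_{-1/2}(M') \geq -\frac{1}{2}$. Applying it to $X'$ yields $d_{-1/2}(-M') \geq -\frac{1}{2}$. Combining this last inequality with the orientation reversal identity $d_{-1/2}(-M') = -d_{1/2}(M')$ (the Ozsv\'ath--Szab\'o symmetry cited in Section~\ref{section:background}) gives $d_{1/2}(M') \leq \frac{1}{2}$. Together with homology cobordism invariance, this proves both estimates for $M$.

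There is essentially no obstacle left: all the substantive input has been done in Proposition \ref{proposition:Seifertboundsboth}, whose key observation is that a Seifert filling of a homology $S^1 \times S^2$ forces the plumbing's intersection form (of determinant zero, being built on a tree with $b_1(\partial) = 1$) to be negative semi-definite on both orientations once one normalizes the Seifert invariants to have $0 < r_i < 1$. The only mild point to verify is the simple-connectivity of the plumbed filling, which is automatic from the tree shape of the graph.
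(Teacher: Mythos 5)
Your proposal is correct and follows exactly the same route as the paper's proof: reduce to the Seifert fibered case by homology cobordism invariance, apply Proposition~\ref{proposition:Seifertboundsboth} to get negative semi-definite plumbed fillings of $M$ and $-M$, invoke Proposition~\ref{proposition:Levine-Ruberman} for each (the paper likewise notes simple connectivity of the plumbings), and convert the bound for $-M$ via $d_{-1/2}(-M)=-d_{1/2}(M)$. No differences of substance.
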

\begin{proof}Since $d_{-1/2}$ and $d_{1/2}$ are homology cobordism invariants, we can assume that $M$ is a Seifert fibered homology $S^1\times S^2$. By Proposition~\ref{proposition:Seifertboundsboth}, both $M$ and $-M$ bound negative semi-definite, plumbed 4-manifolds. Since plumbed 4-manifolds are simply connected, we can apply Proposition~\ref{proposition:Levine-Ruberman} to conclude that $d_{-1/2}(M)\geq -\frac{1}{2}$, and $d_{-1/2}(-M)\geq -\frac{1}{2}$. Since $d_{-1/2}(-M)=-d_{1/2}(M)$, the desired conclusion follows.
\end{proof}

\begin{proof}[Proof of Theorem \ref{theorem:A}] From the surgery diagram of $M_k$ in Figure~\ref{figure:Hedden-Mark}, it is easy to compute that $H_1(M_k)\cong \Z$. By Proposition~\ref{prop:splice}, $M_k$ is a splice of non-trivial knot complements which, by Propositions~\ref{prop:irreduciblesplice} and \ref{prop:pushoutweight}, implies that $M_k$ is irreducible and has weight 1 fundamental group.  Ozsv{\'a}th and Szab{\'o}'s obstruction, Theorem~\ref{thm:OS}, combined with  our calculation of the correction terms, Theorem~\ref{thm:dofM_k},  shows that $M_k$ is not the result of Dehn surgery on a knot in $S^3$. Since $d_{1/2}$ is a homology cobordism invariant, $M_k$ and $M_l$ are not homology cobordant if $k\neq l$ by Theorem~\ref{thm:dofM_k}. Finally,  Theorems~\ref{thm:dofM_k} and~\ref{theorem:dofSeifert} show that $M_k$ is not homology cobordant to any Seifert fibered 3-manifold. This completes the proof.
\end{proof}

\section{Rohlin invariant and another surgery obstruction}\label{section:rokhlin}

While the Heegaard Floer correction terms provide an obstruction for a 3-manifold to arise as 0-surgery on a knot in $S^3$, we see by comparing Theorem~\ref{theorem:dofSeifert} with the zero-surgery obstruction of Section~\ref{section:background} that they cannot show a Seifert manifold is not 0-surgery on a knot (one could view the zero-surgery obstruction and the Seifert constraint Theorem~\ref{theorem:dofSeifert} as arising from the same observation, that in both cases the 3-manifold in question bounds negative semi-definite with both orientations). In this section we observe that the classical Rohlin invariant can obstruct a homology $S^1\times S^2$ from having surgery number~1, and we will see that this obstruction can be effective in the Seifert case. 

Recall that if $(Y,s)$ is a spin 3-manifold, the Rohlin invariant $\mu(Y,s)\in {\mathbb Q}/2\zee$ is defined to equal $\frac{1}{8}\sigma(X)$ modulo $2$, where $X$ is a compact 4-manifold with $\partial X = Y$ that admits a spin structure extending $s$. If $Y$ is a homology sphere then $Y$ admits a unique spin structure, and since a spin 4-manifold with boundary $Y$ has signature divisible by 8, we have $\mu(Y)\in \zee/2\zee$. If $Y_0$ has the homology of $S^1\times S^2$ then $Y_0$ has two spin structures, and hence two Rohlin invariants (each also with values in $\zee/2\zee$). 

\begin{lemma}\label{lemma:rokhlincalc} Let $Y$ be an integral homology sphere and $K\subset Y$ a knot\textup{;} write $Y_0(K)$ for the result of $0$-framed surgery along $K$. Then the Rohlin invariants of $Y_0(K)$ are equal to $\mu(Y_0(K), \mathfrak{s}_0) = \mu(Y)$ and $\mu(Y_0(K), \mathfrak{s}_1) = \mu(Y) + \arf(K)$.
\end{lemma}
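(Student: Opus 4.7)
The plan is to apply the Guillou--Marin (Brown--Kervaire) formula to a spin $4$-manifold bounding $Y_0(K)$, equipped with a characteristic surface built from a Seifert surface for $K$. Since $\Omega_3^{\mathrm{spin}} = 0$, fix a compact spin $4$-manifold $X$ with $\partial X = Y$; by definition, $\mu(Y) \equiv \sigma(X)/8 \pmod{2}$. Form $W_0 := X \cup h_0^2(K)$ by attaching a $0$-framed $2$-handle along $K$. Because $K$ is nullhomologous in the homology sphere $Y$, the $0$-framing coincides with both the Seifert framing and the spin framing of $K$, so the spin structure on $X$ extends over $h_0^2(K)$, making $W_0$ spin; label the induced boundary spin structure as $\mathfrak{s}_0$. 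Pushing a Seifert surface $\Sigma$ for $K$ into a collar $Y \times [0,\epsilon) \subset X$ and capping off by the core of the $2$-handle yields a closed surface $\hat\Sigma \subset W_0$ with $[\hat\Sigma]^2 = 0$ (by matching framings) and $[\hat\Sigma]$ orthogonal to $H_2(X) \subset H_2(W_0)$ (by the collar placement), so the intersection form of $W_0$ splits as $Q_X \oplus (0)$, giving $\sigma(W_0) = \sigma(X)$.

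Since $W_0$ is spin and bounds $(Y_0(K), \mathfrak{s}_0)$, the first formula follows immediately: $\mu(Y_0(K), \mathfrak{s}_0) \equiv \sigma(W_0)/8 \equiv \mu(Y) \pmod{2}$.

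For the second formula, apply the Guillou--Marin formula to $W_0$ with the closed characteristic surface $\hat\Sigma$ (characteristic because $W_0$ is spin and $[\hat\Sigma] \equiv 0 \equiv w_2(W_0) \pmod{2}$):
\[
\sigma(W_0) \equiv [\hat\Sigma]^2 + 8\arf(q_{\hat\Sigma}) + 8\mu(\partial W_0, s_{\hat\Sigma}) \pmod{16},
\]
where $q_{\hat\Sigma} \colon H_1(\hat\Sigma; \zee/2) \to \zee/2$ is the quadratic refinement arising from the framing of the trivialized normal bundle, and $s_{\hat\Sigma}$ is the spin structure on $\partial W_0 = Y_0(K)$ whose restriction to the normal circle of $\hat\Sigma$ is the Lie-group spin structure. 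That normal circle meets $\partial W_0$ along $\mu'$, the core of the $0$-surgery solid torus, which forces $s_{\hat\Sigma} = \mathfrak{s}_1$. Since $\hat\Sigma$ differs from $\Sigma$ only by a capped disk, $q_{\hat\Sigma}$ equals the mod-$2$ Seifert form of $K$, so $\arf(q_{\hat\Sigma}) = \arf(K)$. Substituting $\sigma(W_0) = \sigma(X)$ and $[\hat\Sigma]^2 = 0$ and solving yields $\mu(Y_0(K), \mathfrak{s}_1) \equiv \mu(Y) + \arf(K) \pmod{2}$.

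The main obstacle is the correct invocation of the Guillou--Marin formula for a $4$-manifold with boundary, particularly the identification of $s_{\hat\Sigma}$ with $\mathfrak{s}_1$ (rather than $\mathfrak{s}_0$) and of $q_{\hat\Sigma}$ with the classical Arf form of $K$. Both are standard: the former follows from the fact that $\mu'$ is the normal circle to $\hat\Sigma$ in $W_0$ (which fails to extend over a disk in a trivialized normal bundle of the characteristic surface), while the latter is essentially the definition of $\arf(K)$ in terms of self-linkings along the Seifert framing on $\Sigma$.
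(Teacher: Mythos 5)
Your argument for $\mu(Y_0(K),\mathfrak{s}_0)=\mu(Y)$ is identical to the paper's (extend the spin structure on $X$ over the $0$-framed $2$-handle and observe the signature is unchanged). For the second identity you take a genuinely different route: the paper notes that $(Y_0(K),\mathfrak{s}_1)$ is spin cobordant to $Y_1(K)$ with its unique spin structure and then \emph{quotes} the classical surgery formula $\mu(Y_1(K))=\mu(Y)+\arf(K)$, whereas you apply a with-boundary Guillou--Marin formula directly to $W_0$ with the capped-off Seifert surface $\hat\Sigma$ as characteristic surface. Your version is essentially a re-derivation of the surgery formula the paper cites, so it is more self-contained; the price is that all the content is concentrated in the boundary term of Guillou--Marin when $b_1(\partial W_0)=1$, where the formula is meaningless until the spin structure $s_{\hat\Sigma}$ is pinned down. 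You do address this, and your identification $s_{\hat\Sigma}=\mathfrak{s}_1$ is correct: the meridian of $\hat\Sigma$ is isotopic in the complement to the core of the surgery solid torus, which generates $H_1(Y_0(K);\zee/2)$, so the characteristic spin structure on $W_0\setminus\nu(\hat\Sigma)$ (non-bounding on meridians) and the restriction of the honest spin structure on $W_0$ (bounding on that circle, which bounds the core of the handle) restrict to the two distinct spin structures on the boundary; likewise $q_{\hat\Sigma}$ is the mod-$2$ Seifert self-linking form, so $\arf(q_{\hat\Sigma})=\arf(K)$. One small imprecision: $[\hat\Sigma]$ is \emph{not} zero in $H_2(W_0;\zee/2)$ --- it generates the new summand coming from the handle; what makes $\hat\Sigma$ characteristic is that it has zero mod-$2$ pairing with every class (equivalently, its image in $H_2(W_0,\partial W_0;\zee/2)$ vanishes), which suffices because the spin form on $W_0$ is even. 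It is worth noting that the characteristic-surface formula you invoke is exactly the Wu-class computation the paper itself performs for the manifolds $N_k$ in Section 7 (there with spherical Wu classes, so the Arf term drops out), so your approach has the virtue of treating both calculations in the paper by one and the same mechanism.
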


\begin{proof} Let $X$ be a spin 4-manifold with boundary $Y$. The obvious 0-framed 2-handle cobordism $W$ from $Y$ to $Y_0(K)$ carries a (unique) spin structure, and if $\mathfrak{s}_0$ is the spin structure on $Y_0(K)$ induced by the one on the cobordism, then $X\cup_Y W$ is a spin 4-manifold with spin boundary $(Y_0(K),\mathfrak{s}_0)$ and the same signature as $X$. Hence $\mu(Y_0(K), \mathfrak{s}_0) = \mu(Y)$. 
	
	It is not hard to see that the other spin structure on $Y_0(K)$ is spin cobordant (by a 0-framed surgery cobordism) to the unique spin structure on $Y_1(K)$, the result of $1$-framed surgery on $K$ (see, for example, Section 5.7 of \cite{gompfstipsicz}). The same argument as above implies $\mu(Y_0(K), \mathfrak{s}_1) = \mu(Y_1(K))$. On the other hand, the surgery formula for the Rohlin invariant (as in \cite[Theorem 2.10]{saveliev}) implies $\mu(Y_1(K)) = \mu(Y) + \arf(K)$.
\end{proof}

One could also phrase the lemma as the statement that the Rohlin invariants of $Y_0(K)$ are equal to $\mu(Y)$ and $\mu(Y_1(K))$. Since $\mu(S^3) = 0$, we infer:

\begin{corollary} If $Y_0$ is a $3$-manifold obtained by $0$-framed surgery on a knot in $S^3$, then at least one of the Rohlin invariants of $Y_0$ vanishes. The other Rohlin invariant is equal to the Arf invariant of any knot $K\subset S^3$ such that $Y_0 = S^3_0(K)$.
\end{corollary}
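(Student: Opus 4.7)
The plan is to derive the corollary as an immediate specialization of Lemma \ref{lemma:rokhlincalc} to the case $Y = S^3$. First I would observe that $S^3$ bounds the standard $4$-ball, which is spin with signature $0$, so $\mu(S^3) = 0$. Substituting into the formulas of Lemma \ref{lemma:rokhlincalc}, the two Rohlin invariants of $Y_0 = S^3_0(K)$ are
\[
\mu(Y_0, \mathfrak{s}_0) = \mu(S^3) = 0 \quad \text{and} \quad \mu(Y_0, \mathfrak{s}_1) = \mu(S^3) + \arf(K) = \arf(K).
\]
Hence one of the two Rohlin invariants vanishes, and the other equals $\arf(K)$.

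To finish, I would note that the second claim (the non-vanishing Rohlin invariant equals $\arf(K)$ for \emph{any} $K$ realizing the surgery presentation) requires only that the two spin structures on $Y_0$ be labeled consistently across different choices of $K$. Since the labels $\mathfrak{s}_0$ and $\mathfrak{s}_1$ in the lemma are defined intrinsically from the surgery cobordism, and since the lemma assigns $0$ to $\mathfrak{s}_0$ independently of $K$, the value on $\mathfrak{s}_1$ must equal $\arf(K)$ regardless of which knot $K$ one selects—equivalently, $\arf(K)$ is an invariant of the $3$-manifold $Y_0$ (this is the classical fact that the Arf invariant is determined by $0$-surgery, recovered as a by-product).

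There is no real obstacle here: the entire argument is a one-line substitution into Lemma \ref{lemma:rokhlincalc}, and the only thing worth flagging explicitly is that $\mu(S^3) = 0$ so that the first Rohlin invariant vanishes outright.
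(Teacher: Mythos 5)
Your proposal is correct and matches the paper's own argument, which likewise obtains the corollary as an immediate specialization of Lemma \ref{lemma:rokhlincalc} to $Y = S^3$ using $\mu(S^3)=0$. The additional remark about consistency of the spin-structure labels across different knots $K$ is a harmless (and accurate) elaboration, but the core of the argument is identical.
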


\begin{corollary}\label{corollary:0surgeryobstruction} If an integral homology $S^1\times S^2$ has two nontrivial Rohlin invariants, then it is not obtained by surgery on a knot in $S^3$. 
\end{corollary}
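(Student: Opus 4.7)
The plan is that this is an essentially immediate consequence of the preceding corollary once we observe that any Dehn surgery on a knot in $S^3$ that produces an integral homology $S^1\times S^2$ must in fact be $0$-framed surgery. The entire argument is a short reduction; there is no real obstacle.

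First I would note the standard homology calculation: for a knot $K\subset S^3$ and a slope $p/q$ (with $\gcd(p,q)=1$ and $q\neq 0$), the first homology of $S^3_{p/q}(K)$ is $\zee/p\zee$. Hence if $Y$ is an integral homology $S^1\times S^2$ and $Y\cong S^3_{p/q}(K)$ for some knot $K$, then necessarily $p=0$, i.e.\ $Y\cong S^3_0(K)$ is the result of $0$-framed surgery. Next I would apply the preceding corollary: the two Rohlin invariants of $S^3_0(K)$ are $0$ and $\arf(K)\in\zee/2\zee$, so at least one of them vanishes. Taking the contrapositive, if both Rohlin invariants of $Y$ are nontrivial, then $Y$ cannot be written as $S^3_0(K)$ for any knot $K$, and so by the observation above $Y$ cannot arise as any Dehn surgery on a knot in $S^3$. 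This completes the proof.
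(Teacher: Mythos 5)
Your argument is correct and matches the paper's intended reasoning exactly: the paper leaves this corollary as an immediate consequence of the preceding one, having already noted in the introduction that a surgery on a knot in $S^3$ yielding an integral homology $S^1\times S^2$ must have coefficient zero since $H_1(S^3_{p/q}(K))\cong \zee/p\zee$. Nothing further is needed.
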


\section{Properties of the manifolds $N_k$}\label{section:Ozsvath-Szabo}
In this section, we discuss the manifolds $N_k$ given by the surgery diagrams of Figure~\ref{figure:OS}.
\begin{proposition}\label{proposition:Seifert}For any positive integer $k$, $N_k$ is an irreducible Seifert fibered $3$-manifold.
\end{proposition}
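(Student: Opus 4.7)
The plan is to carry out explicit Kirby calculus on the surgery diagram to convert it into a standard star-shaped plumbing, read off the Seifert invariants, and then invoke a classical classification result for irreducibility.

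First, I would identify the components of the surgery link in Figure~\ref{figure:OS} and apply a sequence of slam-dunk moves and, if necessary, blow-ups/blow-downs and handle-slides to transform the diagram into an equivalent plumbed $4$-manifold description. The target is a star-shaped weighted graph $\Gamma_k$ with one central vertex (weight $e_k$) and three legs of integer weights whose negative continued fractions $[a_{i1},\ldots,a_{ik_i}]$ recover $-1/r_{i}$ for rational numbers $r_1,r_2,r_3 \in (0,1)$. The presence of the parameter $k$ in the surgery coefficients should propagate as a shift along one of the legs (this is the ``family'' feature of the construction). By construction, $\partial$ of the plumbing $X_{\Gamma_k}$ is then exactly $M(e_k;r_1,r_2,r_3)$ in the notation of Figure~\ref{figure:Seifertmanifold}, so $N_k$ is Seifert fibered over $S^2$ with exactly three exceptional fibers. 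This simultaneously records the data needed later in Section~\ref{section:Seifert} and in the Rohlin computation.

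Second, I would verify that the three multiplicities $r_1, r_2, r_3$ obtained in the previous step all have $r_i \neq 0$ for every $k\geq 1$, so that the three fibers are genuinely exceptional and not regular. Since the surgery diagram is already known (from the existing homology computation) to yield $H_1(N_k;\mathbb{Z})\cong \mathbb{Z}$, the Seifert structure is in particular over the base $S^2$ (not $\mathbb{RP}^2$) with nonzero exceptional multiplicities; this will be verified as a byproduct of the continued-fraction computation.

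Third, for irreducibility I would appeal to the well-known classification (due to Epstein and Waldhausen; see e.g.\ Hatcher's \emph{Notes on Basic 3-Manifold Topology}) stating that a closed orientable Seifert fibered $3$-manifold is reducible if and only if it is $S^1\times S^2$ or $\mathbb{RP}^3\#\mathbb{RP}^3$. Since $N_k$ has three genuine exceptional fibers, it is not $S^1\times S^2$ (which admits only Seifert fibrations with at most two multiplicity-$1$ fibers); and since $H_1(N_k;\mathbb{Z})\cong \mathbb{Z}$ while $H_1(\mathbb{RP}^3\#\mathbb{RP}^3;\mathbb{Z})\cong \mathbb{Z}/2\oplus\mathbb{Z}/2$, it is not $\mathbb{RP}^3\#\mathbb{RP}^3$ either. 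Hence $N_k$ is irreducible.

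The main obstacle is the explicit Kirby calculus in the first step: the surgery diagram must be manipulated in a way that keeps track of the dependence on $k$ uniformly, so that a single star-shaped plumbing description is produced whose leg weights depend in a controlled way on $k$. Everything else---the reading-off of Seifert invariants and the appeal to the reducibility classification---is a short and standard follow-up once the plumbing picture is in hand.
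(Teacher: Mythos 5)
Your proposal follows essentially the same route as the paper: Kirby moves (slam-dunks) to exhibit $N_k$ as the boundary of a star-shaped plumbing, hence a Seifert fibration over $S^2$ with three exceptional fibers, followed by the classification of reducible orientable Seifert fibered $3$-manifolds as $S^1\times S^2$ or $\mathbb{RP}^3\#\mathbb{RP}^3$, ruled out respectively by the Seifert classification and by $H_1(N_k)\cong\mathbb{Z}$. The only difference is that the paper actually records the Kirby calculus (its Figure~\ref{figure:Ozsvath_Szabo_Kirbydiagram}) and the resulting slopes $\tfrac{8k-3}{16k-2}$, $\tfrac{1}{8k-1}$, $\tfrac{1}{2}$, which your outline defers.
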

\begin{proof} We can see a Seifert fibered structure of $N_k$ from the sequence of Kirby moves depicted in Figure~\ref{figure:Ozsvath_Szabo_Kirbydiagram}. We remark that a similar sequence of Kirby moves is given in Lemma 2.1 of~\cite{Lisca-Stipsicz:2007-2}. By Figure~\ref{figure:Ozsvath_Szabo_Kirbydiagram}, $N_k$ admits a Seifert fibering $ N_k\to S^2$. Note that the slopes $r_i$ of the exceptional fibers are $\frac{8k-3}{16k-2}$, $\frac{1}{8k-1}$ and $\frac{1}{2}$, respectively. It is known that any orientable, reducible Seifert fibered 3-manifold is homeomorphic to either $S^1\times S^2$ or $\R\mathbb{P}^3\#\R\mathbb{P}^3$ (for example, see \cite[Lemma~VI.7]{Jaco:1980}). Since $H_1(N_k)\cong \Z$, $N_k$ is not homeomorphic to $\R\mathbb{P}^3\#\R\mathbb{P}^3$. By the homeomorphism classification of Seifert fibered 3-manifolds \cite{Seifert}, we can conclude that $N_k$ is not homeomorphic to $S^1\times S^2$, and hence $N_k$ is irreducible.
\end{proof}

\begin{figure}[h]
	\centering
	\includegraphics[width=0.9\textwidth]{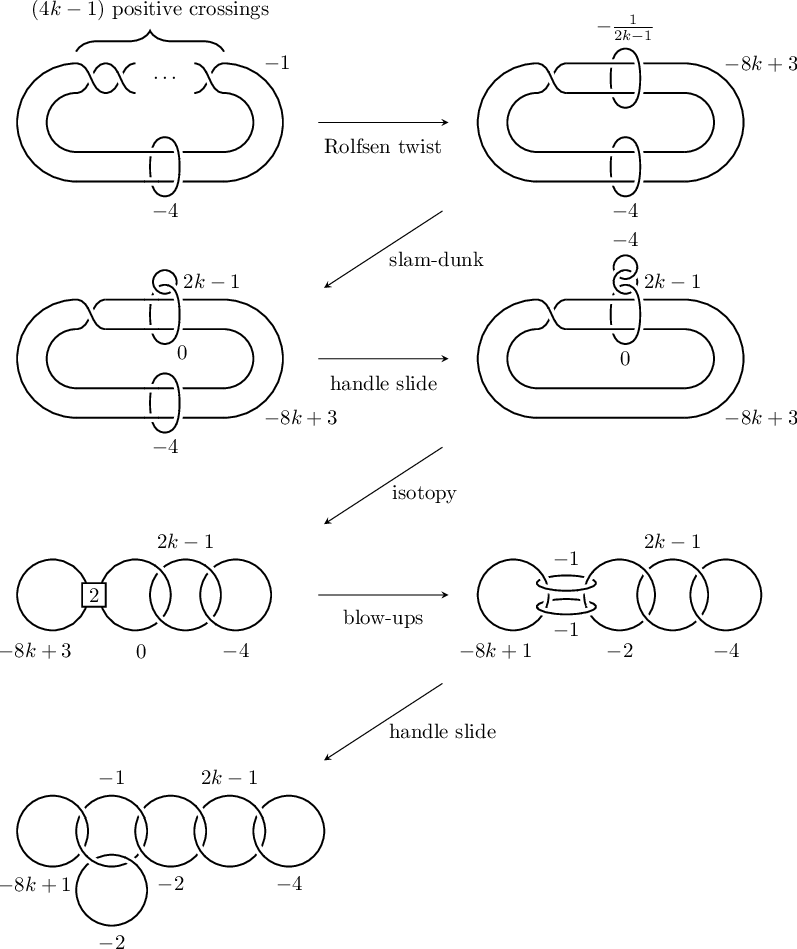}
	\caption{A Seifert fibered structure of $N_k$.}
	\label{figure:Ozsvath_Szabo_Kirbydiagram}
\end{figure}

\begin{proposition}The weight of $\pi_1(N_k)$ is one for any positive integer $k$.
\end{proposition}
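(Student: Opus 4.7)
The plan is to derive a presentation of $\pi_1(N_k)$ from the surgery diagram in Figure~\ref{figure:OS} via van Kampen's theorem and then exhibit a single element whose normal closure is all of $\pi_1(N_k)$. Writing $L$ for the surgery link, the fundamental group of $N_k$ is
\[
\pi_1(N_k) \;=\; \pi_1(S^3 \smallsetminus L)\Big/ \bigl\langle\!\bigl\langle \mu_i^{p_i}\lambda_i^{q_i} \bigr\rangle\!\bigr\rangle,
\]
where $(p_i,q_i)$ is the surgery slope on the $i$-th component and $\mu_i,\lambda_i$ are its meridian and longitude. After Tietze moves collapsing conjugate Wirtinger meridians, we may take one meridian per component as our generating set, with each longitude $\lambda_i$ expressed as a word in the meridians of the other components.

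First I would choose a distinguished meridian $\mu$ (the one suggested by the Kirby sequence in Figure~\ref{figure:Ozsvath_Szabo_Kirbydiagram} that exposes the Seifert fibering). Then I would iteratively show that each remaining meridian lies in the normal closure $\langle\!\langle\mu\rangle\!\rangle$: because $\gcd(p_i,q_i)=1$, the surgery relation $\mu_i^{p_i}\lambda_i^{q_i}=1$ lets us solve for $\mu_i$ in terms of $\lambda_i$, and $\lambda_i$ is already a product of conjugates of meridians of the other components; starting from $\mu$ and propagating through the linking graph produces the desired expressions.

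Equivalently, I would exploit the Seifert presentation afforded by Proposition~\ref{proposition:Seifert}:
\[
\pi_1(N_k) \;=\; \bigl\langle\, q_1, q_2, q_3, h \;\big|\; [h,q_i]=1,\; q_i^{\alpha_i}=h^{\beta_i},\; q_1 q_2 q_3 = h^{-e}\,\bigr\rangle,
\]
with $(\alpha_1,\beta_1)=(16k-2,8k-3)$, $(\alpha_2,\beta_2)=(8k-1,1)$, $(\alpha_3,\beta_3)=(2,1)$, and $e\in\Z$ read off from the Kirby moves. Because $\beta_2=\beta_3=1$, we have $h=q_2^{8k-1}=q_3^2$, so $h$ lies in both $\langle\!\langle q_2\rangle\!\rangle$ and $\langle\!\langle q_3\rangle\!\rangle$; the product relation then forces $q_1\in\langle\!\langle q_2,q_3\rangle\!\rangle$. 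A natural candidate normal generator is $w=q_2^a q_3^b$ with $(a,b)$ chosen so that $[w]$ generates $H_1(N_k)\cong\Z$.

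The main obstacle is the nonabelian verification that this single element $w$ suffices. The leverage is the centrality of $h$: modulo $\langle\!\langle w\rangle\!\rangle$ the relations above collapse the quotient to an extension of a finite group by a cyclic group generated by the image of the remaining $q_j$, and a coprimality condition controls whether this extension is trivial. That the right coprimality holds is forced by the arithmetic input that $H_1(N_k)\cong\Z$ is \emph{infinite} cyclic, i.e., that the relevant linking-matrix determinant vanishes rather than taking some nontrivial value; once this is in place, the normal closure of $w$ exhausts $\pi_1(N_k)$, proving the weight is one.
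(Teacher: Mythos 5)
Your overall route---pass to the Seifert presentation of $\pi_1(N_k)$ coming from the Kirby moves and kill a single element whose image generates $H_1(N_k)\cong\Z$---is the same as the paper's, and your observation that $h=q_2^{8k-1}=q_3^2$ forces the weight to be at most $2$ is correct. But there is a genuine gap exactly at the crux: you never verify that the normal closure of your candidate $w$ is all of $\pi_1(N_k)$. You assert that ``a coprimality condition controls whether this extension is trivial'' and that the right coprimality ``is forced by the arithmetic input that $H_1(N_k)\cong\Z$ is infinite cyclic.'' That inference is invalid: cyclic abelianization is a \emph{necessary} condition for weight one, not a sufficient one, and the fact that $[w]$ generates $H_1$ says nothing by itself about whether $\langle\!\langle w\rangle\!\rangle$ exhausts $\pi_1$. (If cyclic $H_1$ forced weight one, the proposition would be automatic for every homology $S^1\times S^2$, and the Wiegold-type questions quoted in the introduction would be trivial.) Moreover the quotient $\pi_1(N_k)/\langle\!\langle w\rangle\!\rangle$ depends on the actual word $w$, not just on its homology class, so ``choose $(a,b)$ so that $[w]$ generates $H_1$'' does not pin down a working normal generator.

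What is needed, and what the paper supplies, is an explicit choice and an explicit computation. Eliminating $x_3$ and $h$ from the Seifert presentation gives
$\pi_1(N_k)\cong \langle x_1,x_2\mid x_1^{16k-2}=x_2^{(8k-1)(8k-3)},\ x_2^{8k-1}=(x_1x_2)^2,\ [x_2^{8k-1},x_1]=1\rangle$,
and killing the element $x_2^{4k-2}x_1^{-1}$ (i.e.\ setting $x_1=x_2^{4k-2}$) turns the middle relation into $x_2^{8k-1}=x_2^{8k-2}$, whence $x_2=1$ and the quotient is trivial. Your writeup stops one step short of this: you name the right strategy but replace the decisive computation with an appeal to a principle that is false in general. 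Note also that the normal generator that works involves $q_1$ and $q_2$ (in your notation), not $q_2$ and $q_3$ as your candidate $w=q_2^aq_3^b$ suggests; to complete the argument you must fix a concrete $w$ and carry out the Tietze reduction showing the quotient collapses.
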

\begin{proof}We observed above that $N_k$ is a Seifert fibered 3-manifold with 3 exceptional fibers whose slopes are  $\frac{8k-3}{16k-2}$, $\frac{1}{8k-1}$ and $\frac{1}{2}$.  Therefore we have a presentation of $\pi_1(N_k)$ as follows (compare \cite[page 91]{Jaco:1980}):
	\begin{align*}\pi_1(N_k)&\cong \langle x_1,x_2,x_3,h\mid x_1^{16k-2}=h^{8k-3}_{\vphantom{1}}, x_2^{8k-1}=h, x_3^2=h, x_1x_2x_3=h, [h,x_i]=1\rangle\\
	&\cong\langle x_1,x_2,h\mid x_1^{16k-2}=h^{8k-3}, x_2^{8k-1}=h, h=x_1x_2x_1x_2, [h,x_1]=[h,x_2]=1\rangle\\
	&\cong \langle x_1^{\vphantom{16k-2}},x_2^{\vphantom{16k-2}}\mid x_1^{16k-2}=x_2^{(8k-1)(8k-3)}, x_2^{8k-1}=(x_1^{\vphantom{8k-1}}x_2^{\vphantom{8k-1}})^2, [x_2^{8k-1},x_1^{\vphantom{8k-1}}]=1\rangle.
	\end{align*}
	In the second equality, we cancel the generator $x_3$ with the relation $x_3^{\vphantom{-1}}=x_2^{-1}x_1^{-1}h$. Note that the relation $x_3^2=h$ is equivalent to the relation $x_2^{-1}x_1^{-1}hx_2^{-1}x_1^{-1}h=h$, and hence to the relation $h=x_1x_2x_1x_2$. In the last equality, we cancel the generator $h$ and the relation $h=x_2^{8k-1}$.
	
	Let $\langle \langle  x_2^{4k-2}x_1^{-1}\rangle\rangle$ be the normal subgroup of $\pi_1(N_k)$ generated by $x_2^{4k-2}x_1^{-1}$. Then
	\begin{align*} \pi_1(N_k)/\langle \langle x_2^{4k-2}x_1^{-1}\rangle\rangle
	&\cong \langle x_1^{\vphantom{16k-2}},x_2^{\vphantom{16k-2}}\mid x_1^{16k-2}=x_2^{(8k-1)(8k-3)}, x_2^{8k-1}=(x_1^{\vphantom{8k-1}}	x_2^{\vphantom{8k-1}})^2, x_1^{\vphantom{4k-2}}=x_2^{4k-2}\rangle\\
	&\cong \langle x_2\mid x_2^{(8k-1)(8k-4)}=x_2^{(8k-1)(8k-3)}, x_2^{8k-1}=x_2^{8k-2}\rangle\\
	&\cong \langle x_2\mid x_2^{(8k-1)(8k-4)}=x_2^{(8k-1)(8k-3)}, x_2=1\rangle=1. 
	\end{align*}
	Hence, the weight of $\pi_1(N_k)$ is 1.
\end{proof}

Since $N_k$ is Seifert, the correction terms do not provide information on the surgery number of $N_k$; instead we  apply the obstruction of Corollary \ref{corollary:0surgeryobstruction} of the previous section. To do so we must calculate the Rohlin invariants of the two spin structures on $N_k$. One way to make this calculation, along the lines of the previous section, is to observe that $N_k$ can be realized as the result of nullhomologous surgery on the singular Seifert fiber of order $4k-1$ in the Brieskorn homology sphere $\Sigma(2,4k-1,8k-1)$, which has Rohlin invariant equal to $k$ modulo 2. Performing surgery on that fiber with framing $+1$ gives another plumbed 3-manifold whose Rohlin invariant is also $k$ modulo 2, and these two calculations give the desired invariants for $N_k$ by the remark after Lemma \ref{lemma:rokhlincalc}. 

Alternatively, one can proceed directly from the final diagram in Figure \ref{figure:Ozsvath_Szabo_Kirbydiagram} using the algorithm in \cite[Section~6]{Neumann-Raymond:1978-1} (see also \cite[Section~4]{Neumann:1979} for the case with nonzero first homology), as follows. If $P_k$ denotes the plumbed 4-manifold described by the last diagram of Figure \ref{figure:Ozsvath_Szabo_Kirbydiagram}, we can find exactly two homology classes $\nu_1,\nu_2\in H_2(P_k;\zee/2)$, represented by embedded spheres or a disjoint union thereof, satisfying $\nu_i.x = x.x\pmod 2$ for each homology class $x$. (Here the dot indicates the intersection product.) These ``spherical Wu classes'' give the Rohlin invariants of the two spin structures on $N_k$ by the formula $\mu(N_k, \mathfrak{s}_i) = \frac{1}{8}(\sigma(P_k) - \nu_i.\nu_i)\pmod 2$, where $\sigma(P_k)$ is the signature of the intersection form on $P_k$.

The two Wu classes on $P_k$ are given by letting $\nu_1$ be the sum of the spheres represented by the circles with framings $-8k+1$ and $-4$, and taking $\nu_2$ as the sum of the $-8k+1$ sphere with the two $-2$ spheres. It is straightforward to check that $P_k$ has $b^+(P_k) = 1$ and hence $\sigma(P_k) = -3$, while $\nu_1.\nu_1 = \nu_2.\nu_2 = -8k-3$. Hence the Rohlin invariants $\mu(N_k, \mathfrak{s}_i)$ are both equal to $k \pmod 2$, and we conclude: 

\begin{theorem}\label{theorem:Nk} For any odd integer $k\geq 1$, the manifold $N_k$ is a Seifert fibered integral homology $S^1\times S^2$ that cannot be obtained by surgery on a knot in $S^3$.
\end{theorem}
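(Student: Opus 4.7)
The plan is to apply Corollary \ref{corollary:0surgeryobstruction}: it suffices to show that both Rohlin invariants of $N_k$ are nonzero in $\zee/2\zee$ whenever $k$ is odd, since Proposition \ref{proposition:Seifert} already gives that $N_k$ is an integral homology $S^1\times S^2$ (it is irreducible, and one reads $H_1(N_k)\cong\zee$ from the surgery diagram). Combined with Proposition \ref{proposition:Seifert}, this will deliver all four assertions of Theorem \ref{theorem:B} at once (irreducibility and weight one follow from the two propositions preceding the theorem, homology cobordism invariance is not needed here; but for the stronger statement of part (4) we observe that the Rohlin invariants are homology cobordism invariants of spin $3$-manifolds).

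For the Rohlin computation I would use the plumbing $P_k$ appearing in the final frame of Figure \ref{figure:Ozsvath_Szabo_Kirbydiagram}, together with the algorithm of Neumann--Raymond \cite{Neumann-Raymond:1978-1}. Concretely, I would identify the two characteristic (spherical Wu) classes $\nu_1,\nu_2\in H_2(P_k;\zee/2)$ as sums of the spheres corresponding to the vertices of the plumbing graph: the first is the sum of the class with framing $-8k+1$ and the class with framing $-4$, the second is the sum of the $-8k+1$ class with the two $-2$ classes. A direct check using the intersection form of $P_k$ shows that each $\nu_i$ satisfies $\nu_i\cdot x\equiv x\cdot x\pmod 2$ for every $x$, so these are the only two Wu classes. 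Then I would compute
\begin{equation*}
\mu(N_k,\mathfrak{s}_i)\;=\;\tfrac{1}{8}\bigl(\sigma(P_k)-\nu_i\cdot\nu_i\bigr)\pmod 2,
\end{equation*}
after verifying $b^+(P_k)=1$ (hence $\sigma(P_k)=-3$) and $\nu_1\cdot\nu_1=\nu_2\cdot\nu_2=-8k-3$. Both Rohlin invariants then evaluate to $k\pmod 2$, which is nonzero precisely when $k$ is odd.

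As a sanity check, and because it gives an independent derivation, I would also carry out the alternative route suggested in the excerpt: exhibit $N_k$ as the result of a nullhomologous surgery on the singular Seifert fiber of order $4k-1$ in the Brieskorn sphere $\Sigma(2,4k-1,8k-1)$. Lemma \ref{lemma:rokhlincalc} then identifies the two Rohlin invariants of $N_k$ with $\mu(\Sigma(2,4k-1,8k-1))$ and $\mu(\Sigma(2,4k-1,8k-1))+\arf(K)$, where $K$ is the singular fiber viewed as a knot in the homology sphere; one computes both of these to be $k\pmod 2$. The matching of the two computations gives strong confidence in the final numerical answer.

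With the Rohlin invariants in hand, the theorem follows: for odd $k$ both spin structures on $N_k$ have nontrivial Rohlin invariant, so Corollary \ref{corollary:0surgeryobstruction} rules out $DS(N_k)=1$, and Proposition \ref{proposition:Seifert} gives the Seifert fibered structure. The main obstacle I anticipate is bookkeeping rather than conceptual: correctly tracking signs, orientations, and framings through the Kirby sequence of Figure \ref{figure:Ozsvath_Szabo_Kirbydiagram} so that the plumbing $P_k$ really does bound $N_k$ with the stated spin structures, and verifying that the two candidate Wu classes are indeed characteristic for the full intersection form (not just for the generators shown). Once these combinatorial checks are done, the parity computation $\nu_i\cdot\nu_i\equiv -8k-3\pmod{16}$ is the only substantive arithmetic step, and it reduces modulo $16$ to a short calculation.
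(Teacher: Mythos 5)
Your proposal is correct and follows essentially the same route as the paper: it applies the Rohlin-invariant obstruction of Corollary~\ref{corollary:0surgeryobstruction} after computing both Rohlin invariants of $N_k$ to be $k \bmod 2$ via the same two spherical Wu classes in the plumbing $P_k$ (with $\sigma(P_k)=-3$ and $\nu_i\cdot\nu_i=-8k-3$), and even anticipates the paper's alternative derivation via surgery on a singular fiber of $\Sigma(2,4k-1,8k-1)$.
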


Moreover, since the Rohlin invariant is unchanged under integral homology cobordism, we have that when $k$ is odd, no $N_k$ is homology cobordant to a 3-manifold with $DS(Y) =1$. This concludes the proof of Theorem~\ref{theorem:B}.

\begin{figure}[h]
	\centering
	\includegraphics[scale=0.9]{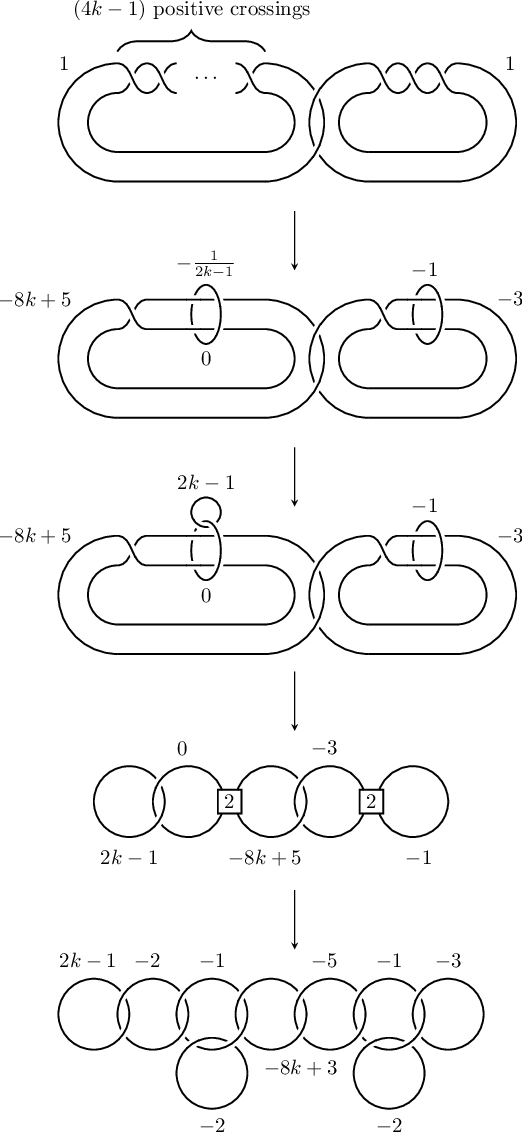}
	\caption{A plumbing diagram of $M_k$.}
	\label{figure:Hedden_Mark_plumbingdiagram}
\end{figure}
\begin{remark}It is interesting to see if the obstruction from Corollary~\ref{corollary:0surgeryobstruction} can be applied to the manifolds $M_k$ ($k\geq 1$) in Figure~\ref{figure:Hedden-Mark}. One can compute Rohlin invariants of $M_k$ by using a plumbing description of the manifold given in Figure~\ref{figure:Hedden_Mark_plumbingdiagram}. (Since the computation is similar to the above, we leave this to the reader.) When $k$ is odd, the Rohlin invariants of $M_k$ are both non-trivial, and hence this also obstruct $M_k$ from being Dehn surgery on a knot in $S^3$. On the other hand, when $k$ is even, one Rohlin invariant of $M_k$ is trivial, so the obstruction from Corollary~\ref{corollary:0surgeryobstruction} is ineffective in this case.
\end{remark}

As noted in the introduction, the manifold $N_1$ is the example from Ozsv\'{a}th and Szab\'{o} \cite[Section 10.2]{Ozsvath-Szabo:2003-2}, where the case $k=1$ of Theorem \ref{theorem:Nk} was claimed based on an argument using the Heegaard Floer correction terms. As we have seen, the correction terms do not provide an obstruction to $DS = 1$ in the case of Seifert manifolds. Here we revisit the calculation of $d_{\pm 1/2}(N_1)$ from \cite{Ozsvath-Szabo:2003-2}, which relies on the surgery exact triangle and some understanding of the maps therein.  In particular, they fit the Floer homology of $N_1$ in an exact triangle between two lens spaces, $L(49,40)$ and $L(49,44)$, and identify a spin structure on the 2-handle cobordism between $L(49,40)$ and $N_1$.  The map on Floer homology associated to this spin structure, which is summed along with those associated to the other spin$^c$ structures, induces an isomorphism between submodules of $HF^\infty$ isomorphic to $\mathbb{F}[U,U^{-1}]$.  It follows that that the ``tower" in $HF^+$ of the spin structure on $L(49,40)$ surjects onto the tower of $HF^+(N_1)$ relevant to $d_{-1/2}$.  From this, and the grading shift by $-\frac{1}{2}$ for the map induced by the spin structure, one concludes an inequality \[d_{-1/2}(N_1)\ge d(L(49,40),\mathfrak{s}_0)-\tfrac{1}{2}=-\tfrac{5}{2}.\]
Here $d(L(49,40),\mathfrak{s}_0)$ is the correction term for the spin structure, which is easily seen to be $-2$.  This inequality is opposite to the one inferred by Ozsv\'{a}th and Szab\'{o}.  A rather detailed examination of the exact triangle shows that the bottommost element of the tower associated to the spin structure lies in the kernel of the sum of  the cobordism maps involved in the exact triangle (which is, of course, the only way for $d_{-1/2}(N_1)> -\frac{5}{2}$).

We conclude with  an alternate proof that $d_{-1/2}(N_k)\geq -\frac{1}{2}$ and $d_{1/2}(N_k)\leq \frac{1}{2}$. First recall a result of Ozsv\'{a}th and Szab\'{o}.
\begin{proposition}[{\cite[Corollary~9.14]{Ozsvath-Szabo:2003-2}}]\label{theorem:Ozsvath-Szaboinequality} Suppose that $K$ is a knot in a homology $3$-sphere~$Y$. Let $Y_0$ be the result of Dehn surgery along $K$ via its Seifert framing. Then 
	\[d_{1/2}(Y_0)-\tfrac{1}{2}\leq d(Y)\leq d_{-1/2}(Y_0)+\tfrac{1}{2}.\]
\end{proposition}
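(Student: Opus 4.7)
The plan is to prove $d(Y)\le d_{-1/2}(Y_0)+\tfrac{1}{2}$ by tracking the cobordism map induced by the trace of $0$-surgery; the companion lower bound will then follow by orientation reversal. Let $W\colon Y\to Y_0$ be the $2$-handle cobordism attaching a $0$-framed $2$-handle along $K$. Since $K$ is null-homologous in $Y$, the capped-off Seifert surface generates $H_2(W;\zee)\cong\zee$ with self-intersection zero, so the intersection form of $W$ vanishes identically. In particular $\chi(W)=1$, $\sigma(W)=0$, $b_2^{\pm}(W)=0$, and $W$ is negative semi-definite. Moreover $W$ is spin, so there is a spin$^c$ structure $\mathfrak{t}$ on $W$ with $c_1(\mathfrak{t})=0$; it restricts to the unique spin$^c$ structure on $Y$ and to the torsion structure $\mathfrak{s}_0$ on $Y_0$.

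The cobordism map $F^\infty_{W,\mathfrak{t}}\colon HF^\infty(Y)\to HF^\infty(Y_0,\mathfrak{s}_0)$ shifts the absolute $\Q$-grading by $(c_1(\mathfrak{t})^2-2\chi(W)-3\sigma(W))/4=-\tfrac{1}{2}$. The same principle underlying the proof of Corollary~9.13 of \cite{Ozsvath-Szabo:2003-2}, applied to the negative semi-definite cobordism $W$, shows that $F^\infty_{W,\mathfrak{t}}$ is non-trivial. Since $HF^\infty(Y)\cong\mathbb{F}[U,U^{-1}]$ is simple over $\mathbb{F}[U,U^{-1}]$ and the target is a direct sum of two copies of $\mathbb{F}[U,U^{-1}]$, $F^\infty_{W,\mathfrak{t}}$ embeds $HF^\infty(Y)$ isomorphically onto one of these summands; the parity of the grading shift identifies this summand as the one whose projection to $HF^+$ is the tower $\mathcal{T}^+_{d_{-1/2}(Y_0)}$.

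With this in hand, the inequality becomes a preimage computation. Let $\alpha\in \mathcal{T}^+_{d_{-1/2}(Y_0)}$ be the bottom generator, sitting in grading $d_{-1/2}(Y_0)$, and lift it to $\tilde\alpha\in HF^\infty(Y_0,\mathfrak{s}_0)$ in the same grading within the distinguished summand. Using the isomorphism above, there is a unique $\tilde\beta\in HF^\infty(Y)$ with $F^\infty_{W,\mathfrak{t}}(\tilde\beta)=\tilde\alpha$, and the grading shift $-\tfrac{1}{2}$ places $\tilde\beta$ in grading $d_{-1/2}(Y_0)+\tfrac{1}{2}$. Functoriality of the long exact sequence gives
\[F^+_{W,\mathfrak{t}}(\pi_Y(\tilde\beta))=\pi_{Y_0}(F^\infty_{W,\mathfrak{t}}(\tilde\beta))=\pi_{Y_0}(\tilde\alpha)=\alpha\ne 0,\]
so $\pi_Y(\tilde\beta)\ne 0$ in $HF^+(Y)$. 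Because $\pi_Y$ annihilates the submodule of $HF^\infty(Y)\cong\mathbb{F}[U,U^{-1}]$ lying in gradings strictly below $d(Y)$, we conclude $d_{-1/2}(Y_0)+\tfrac{1}{2}\ge d(Y)$.

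The complementary inequality follows by applying the argument above to the knot $K\subseteq -Y$, whose Seifert-framed surgery is $-Y_0$; this yields $d(-Y)\le d_{-1/2}(-Y_0)+\tfrac{1}{2}$, which translates via $d(-Y)=-d(Y)$ and $d_{-1/2}(-Y_0)=-d_{1/2}(Y_0)$ (Proposition~4.10 of \cite{Ozsvath-Szabo:2003-2}) to $d_{1/2}(Y_0)-\tfrac{1}{2}\le d(Y)$. The main technical point is the non-triviality of $F^\infty_{W,\mathfrak{t}}$, which is precisely where the negative semi-definiteness of $W$ enters the argument.
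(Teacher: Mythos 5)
The paper does not actually prove this proposition: it is imported wholesale from Ozsv\'ath--Szab\'o \cite[Corollary~9.14]{Ozsvath-Szabo:2003-2}, so there is no internal argument to compare yours against. That said, your reconstruction is the standard proof of that corollary, and the bookkeeping all checks out: the grading shift $(c_1(\mathfrak t)^2-2\chi(W)-3\sigma(W))/4=-\tfrac12$ is computed correctly; since $HF^\infty(Y)$ is supported in even gradings, the shift by $-\tfrac12$ does force the image of $F^\infty_{W,\mathfrak t}$ into the summand of $HF^\infty(Y_0,\mathfrak s_0)$ supported in gradings $\equiv-\tfrac12\pmod 2$; a homogeneous lift of the bottom of $\mathcal T^+_{d_{-1/2}(Y_0)}$ automatically lies in that summand, so the preimage $\tilde\beta$ exists and sits in grading $d_{-1/2}(Y_0)+\tfrac12$; the identity $F^+_{W,\mathfrak t}\circ\pi_Y=\pi_{Y_0}\circ F^\infty_{W,\mathfrak t}$ then gives $\pi_Y(\tilde\beta)\neq0$, hence the grading of $\tilde\beta$ is at least $d(Y)$; and the orientation-reversal step correctly combines $-(Y_0(K))=(-Y)_0(K)$ with $d(-Y)=-d(Y)$ and $d_{-1/2}(-Y_0)=-d_{1/2}(Y_0)$.

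The one place where you lean on an unproved assertion is exactly the one you flag: the non-triviality of $F^\infty_{W,\mathfrak t}$. Saying it follows from ``the same principle underlying Corollary~9.13'' is not a proof; negative semi-definiteness of $W$ does not formally imply that a cobordism map is nonzero. What is needed is the specific input to the proof of \cite[Theorem~9.11]{Ozsvath-Szabo:2003-2}: for a two-handle cobordism with $b_2^+(W)=0$ the induced map on $HF^\infty$ in a torsion spin$^c$ structure is injective (equivalently here, non-trivial, since $HF^\infty(Y)\cong\mathbb{F}[U,U^{-1}]$ is simple as a graded module). If you intend your argument to be self-contained you must either cite that lemma precisely or reprove it, e.g.\ from the behaviour of $HF^\infty$ in the surgery exact triangle or the integer surgery formula. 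With that single input supplied, your proof is complete and is essentially the argument behind the cited result.
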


\begin{figure}[h]
	\centering
	\includegraphics[scale=1]{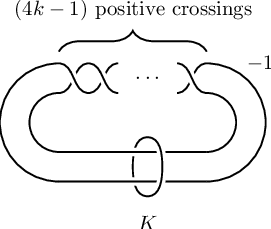}
	\caption{A knot $K$ in $S^3_{-1}(T_{2,4k-1})$.}
	\label{figure:K_k}
\end{figure}
\begin{proposition}\label{proposition:dofN_k}For any positive integer $k$, $d_{-1/2}(N_k)\geq -\frac{1}{2}$ and $d_{1/2}(N_k)\leq \frac{1}{2}$.
\end{proposition}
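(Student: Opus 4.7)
The plan is to realize $N_k$ as nullhomologous Dehn surgery on a knot in an integral homology $3$-sphere $Y$ with $d(Y) = 0$, and then apply Proposition~\ref{theorem:Ozsvath-Szaboinequality}. Once such a realization is in hand, both inequalities fall out of a single application of that proposition.

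The first step is to identify $N_k$ with the result of Dehn surgery along the Seifert (i.e., null-homologous) framing of the knot $K \subset S^3_{-1}(T_{2,4k-1})$ depicted in Figure~\ref{figure:K_k}. This should be a routine Kirby calculus exercise starting from the surgery diagram for $N_k$ in Figure~\ref{figure:OS}: viewing the torus knot component with its $-1$ surgery coefficient as producing the ambient manifold $S^3_{-1}(T_{2,4k-1})$ leaves behind the knot $K$ carrying framing $0$, and one checks that $0$ is indeed the Seifert framing of $K$ in $S^3_{-1}(T_{2,4k-1})$ (equivalently, that the second generator is null-homologous in the complement of the filling torus).

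The second step is to compute $d(Y) = 0$ for $Y := S^3_{-1}(T_{2,4k-1})$. By the formula $d(S^3_{-1}(K)) = 2V_0(\overline{K})$ recalled right before Theorem~\ref{thm:OS}, it suffices to observe that $V_0(T_{2,-(4k-1)}) = 0$, which is the standard vanishing of $V_0$ on negative torus knots (this very fact is already used in the calculation of $d_{-1/2}(M_k)$ at the end of Section~\ref{section:dofM_k}).

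Once these two ingredients are in place, Proposition~\ref{theorem:Ozsvath-Szaboinequality} applied with the given $Y$ and $Y_0 = N_k$ yields
\[d_{1/2}(N_k) - \tfrac{1}{2} \leq d(Y) = 0 \leq d_{-1/2}(N_k) + \tfrac{1}{2},\]
from which both desired inequalities follow at once. The main obstacle is the first step: carrying out the explicit Kirby calculus that rewrites Figure~\ref{figure:OS} in the form of Figure~\ref{figure:K_k} inside $S^3_{-1}(T_{2,4k-1})$, and in particular verifying that the surgery coefficient on $K$ really does correspond to its Seifert framing in that homology sphere.
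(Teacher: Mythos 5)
Your proposal is correct and follows essentially the same route as the paper: the paper's proof likewise realizes $N_k$ as Seifert-framed surgery on the knot $K\subset S^3_{-1}(T_{2,4k-1})$ of Figure~\ref{figure:K_k}, computes $d(S^3_{-1}(T_{2,4k-1}))=2V_0(T_{2,-4k+1})=0$, and applies Proposition~\ref{theorem:Ozsvath-Szaboinequality} to get both inequalities at once.
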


\begin{proof}Consider the knot $K\subset S^3_{-1}(T_{2,4k-1})$ which is depicted in Figure~\ref{figure:K_k}. By a surgery formula given in \cite{Ni-Wu:2015-1}, $d(S^3_{-1}(T_{2,4k-1}))=2V_0(T_{2,-4k+1})=0$ since $k\geq 1$. Then $N_k$ is the result of Dehn surgery along the knot $K\subset S^3_{-1}(T_{2,4k-1})$ via its Seifert framing. By Theorem~\ref{theorem:Ozsvath-Szaboinequality}, we have 
	\[d_{1/2}(N_k)-\tfrac{1}{2}\leq 0\leq d_{-1/2}(N_k)+\tfrac{1}{2}\]
	for any positive integer $k$. This completes the proof.
\end{proof}

\bibliographystyle{amsalpha}
\bibliography{research}

\end{document}